\documentclass[12pt,reqno]{amsart}
\usepackage[utf8]{inputenc}
\usepackage[english]{babel}
\usepackage{comment}
\usepackage{amsmath,amssymb,amsfonts,amsbsy,amsthm,amscd,latexsym,graphicx}
\usepackage{empheq,textcomp}
\usepackage{xcolor}
\usepackage{hyperref}

\theoremstyle{definition}
\newtheorem{theorem}{Theorem} [section]
\newtheorem{corollary}[theorem]{Corollary}
\newtheorem{lemma}[theorem]{Lemma}
\newtheorem{proposition}[theorem]{Proposition}
\newtheorem{definition}[theorem]{Definition}

\newtheorem{remark}[theorem]{Remark}
\newtheorem{example}[theorem]{Example}

\numberwithin{equation}{section}

\AtEndDocument{
  \par
  \bigskip
  \begin{tabular}{@{}l@{}}%
  (Rohit Pai and Ivan Rocha) School of Mathematics, Georgia Institute of Technology,\\ Atlanta, GA, 30318, USA.\\ 
   
    \textit{E-mail address}: \texttt{rpai32@gatech.edu} and \texttt{irocha7@gatech.edu}\\
$~$\\

   (Pu-Ting Yu) Department of Mathematics, University of Oregon, Eugene, OR, 97403,\\ USA.\\ 
   
    \textit{E-mail address}: \texttt{putingyu@uoregon.edu}\\
  \end{tabular}}

\newcommand{\Ec}{{\mathcal{E}}}
\newcommand{\Fc}{{\mathcal{F}}}
\newcommand{\Gc}{{\mathcal{G}}}
\newcommand{\Wc}{{\mathcal{W}}}
\newcommand{\N}{\mathbb{N}}
\newcommand{\R}{\mathbb{R}}

\newcommand{\Z}{\mathbb{Z}}

\newcommand{\Oc}{{\mathcal{O}}}

\newcommand{\Eq}{\, = \,}
\newcommand{\EQ}{\, = \,}

\newcommand{\Le}{\, \le \,}

\newcommand{\Ge}{\, \geq \,}

\newcommand{\plus}{\, + \,}
\newcommand{\qeddef}{{\quad $\diamondsuit$}}

\newcommand{\bigabs}[1]{\bigl|\,#1\,\bigr|}

\newcommand{\ip}[2]{\langle\,#1,#2\,\rangle}

\newcommand{\Bigip}[2]{\Bigl\langle \,#1, \, #2 \,\Bigr\rangle}

\newcommand{\norm}[1]{\|\,#1\,\|}

\newcommand{\Bignorm}[1]{\Bigl\|\,#1\,\Bigr\|}

\newcommand{\bigparen}[1]{\bigl(\,#1\,\bigr)}
\newcommand{\Bigparen}[1]{\Bigl(\,#1\,\Bigr)}

\newcommand{\set}[1]{\{#1\}}
\newcommand{\bigset}[1]{\bigl\{#1\bigr\}}

\newcommand{\clspan}{{\overline{\text{span}}}}

\newcommand{\inN}{_{n\in\N}}

\begin{document}
\title{Woven Weighted Exponentials}
\author{Rohit Pai, Ivan Rocha and Pu-Ting Yu}

\subjclass[2020]{42C15}

\keywords{Weighted exponentials, Gabor system, Woven frames}

\date{\today}

\pagestyle{plain}
\maketitle
\begin{abstract}
  Let $f$ and $g$ be nonzero functions in $L^2([0,1])$. The \emph{woven weighted exponential system} (associated with $f$ and $g$) is defined by $$\Wc(f,g)=\bigset{\set{fe^{2\pi i nt}}_{n\in J} \cup \set{ge^{2\pi i nt}}_{n\in J^c}\,|\,J\subset\Z}.$$
We say that $\Wc(f,g)$ is \emph{wovenly complete}, (resp. \emph{wovenly minimal}, a \emph{woven frame}) if the weaving $\set{fe^{2\pi i nt}}_{n\in J} \cup \set{ge^{2\pi i nt}}_{n\in J^c}$ is complete, (resp. minimal, a frame) for all $J\subseteq \Z.$ 
In this paper, we study conditions that imply certain approximation properties of $\Wc(f,g)$, such as completeness, minimality and 
the frame property. We first provide a complete characterization of the woven weighted exponential systems that are wovenly complete.
We also show that $\Wc(f,g)$ is a woven frame if $f/g$ is strictly positive or strictly negative over $[0,1].$
Additionally, several counterexamples are provided to show that certain seemingly correct conditions do not imply the desired approximation properties of $\Wc(f,g).$ All results presented in this paper apply equivalently to systems of regular translates and Gabor systems 
at critical density in $L^2(\R)$.\\
\end{abstract}

\section{Introduction}
\label{introduction}
\emph{Frames} in separable Hilbert spaces provide a flexible generalization of orthonormal bases, allowing redundant but more stable expansions of vectors. From a theoretical perspective, frames make it possible to construct systems with specific structure (such as Gabor systems, wavelet systems) that might fail to form an orthonormal basis but still provide stable expansions for reconstructing every vector in the space. On the other hand, using frames as the foundation to reconstruct vectors have proved to be useful in numerous applications such as signal processing (\cite{FG98}, \cite{BHNS17}), denoising and inpainting (\cite{CCS08}, \cite{ZYZF16}), and sparse representation \cite{HHKHJ19}. Here we say that a sequence $\set{x_n}_{n\in I}$ in a separable Hilbert space is a \emph{frame} for $H$ if there exist some positive constants $A$ and $B$, called \emph{frame bounds}, such that 
\begin{equation}
\label{frame_ineq}
A\,\norm{x}^2_H \Le \sum_{n\in I}|\ip{x}{x_n}|^2\Le B\,\norm{x}^2_H\quad\text{for all }x\in H.
\end{equation}
We say that $\set{x_n}_{n\in \N}$ is a \emph{Bessel sequence} if at least the upper inequality of Equation (\ref{frame_ineq}) is satisfied.
For more history and theoretical aspect of frames, we refer to \cite{You01}, \cite{Heil11} and \cite{Chr18} for relatively recent textbook treatments.

The motivation behind the work presented in this paper arises from a practical technical challenges in distributed sensing, where one aims at recovering signal using multiple sampling devices. Similar challenges occur in other contexts in which measurements are obtained from distinct sources. As a result, the notion of \textit{woven frames} has emerged in recent years as a way of studying how two or more frames interact.  Let $\set{f_n}_{n \in I}$ and $\set{g_n}_{n \in I}$ be two sequences in $H$, indexed by the same set $I$. For a given subset $J \subseteq I$, the \textit{weaving} associated with $\set{f_n}_{n \in I}$, $\set{g_n}_{n \in I}$ and $J\subseteq I$ is the set defined by 
\begin{equation*}
    \Wc(J) \Eq \set{f_n}_{n \in J} \, \cup \, \set{g_n}_{n \in J^c}.
\end{equation*}
The \emph{woven system} associated with $\set{f_n}_{n \in I}$ and $\set{g_n}_{n \in I}$ is the collection of all weavings $$\Wc\bigparen{\set{f_n}_{n\in I},\set{g_n}_{n\in I}}\Eq\set{\Wc(J)\,|\,J\text{ is any subset of } \Z}.$$
Assume that both $\set{f_n}_{n \in I}$ and $\set{g_n}_{n \in I}$ are frames for $H$. Then the pair of frames $\set{f_n}_{n \in I}$ and $\set{g_n}_{n \in I}$ are called woven frames if every weaving in  $\Wc\bigparen{\set{f_n}_{n\in I},\set{g_n}_{n\in I}}$ is a frame for $H$. 
The notion of woven frames was first introduced in \cite{GrocCasa15} by Bemrose et al.\ and was further investigated in \cite{CFL16}, \cite{HD19}, \cite{VGDD18} and \cite{CabrelliMoulter24} (see also references therein). However, many questions along this line of research remain open. For example, only a small collection of frames is currently known to form woven frames. Beyond the frame property itself, we are also interested in determining under what conditions every weaving of two systems with certain approximation properties (see Section \ref{preliminaries} for details of these specific properties) still retains the same property. We investigate this question in this paper. 


We will particularly study our main question in the setting of \emph{systems of weighted exponentials} due to their connections to \emph{systems of regular translates} and \emph{regular Gabor system at critical density}. Let $f\in L^2[0,1]$. The associated system of weighted exponentials with $f$, denoted by $\Ec(f)$, is the sequence $\set{fe^{2\pi in\cdot x}}_{n\in\Z}$. Our main question is now formulated as follows: ``Assume that two systems of weighted exponentials $\Ec(f)$ and $\Ec(g)$ possess certain approximation properties. Under what conditions do we have that $\set{fe^{2 \pi i n\cdot x}}_{n \in J}\cup \set{ge^{2 \pi i n \cdot x}}_{n \in J^c}$ retains the same properties for every subset $J\subseteq\Z$?" It is known that regular Gabor systems in $L^2(\R)$ with critical density are unitarily equivalent to weighted exponential systems in $L^2[0,1]^{2}$ via \emph{Zak transform}. Moreover, any system of regular translates in $L^2(\R)$ can be unitarily mapped to a weight exponential system through the \emph{fiberization map} (see Section \ref{preliminaries} for the relevant terminology). As a result, all results proved for systems of weighted exponentials can be applied equivalently to regular Gabor systems with critical density and system of regular translates.

The major contributions of this paper are threefold. At first, we prove in Theorem \ref{woven_Riesz_suff_condi} that two frames of weighted exponentials, $\Ec(f)$ and $\Ec(g)$, form a woven frame for $L^2[0,1]$ if the sign of $f(t)$ and $g(t)$ are consistent for almost every $t\in[0,1].$ As an immediate consequence of this result, we obtain the following theorem.
\begin{theorem}
\label{sufficient_woven_frames_special_case}
    Let $f,g\in L^2[0,1]$ be real-valued continuous functions such that $\Ec(f)$ and $\Ec(g)$ are frames for $L^2[0,1].$ Then $\Ec(f)$ and $\Ec(g)$ form a woven frame for $L^2[0,1].$ \qeddef
\end{theorem}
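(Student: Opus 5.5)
Known fact: $\Ec(f)$ is a frame for $L^2[0,1]$ iff there are constants $0<a\le b<\infty$ with $a\le |f(t)|\le b$ for a.e. $t$. The plan is to reduce the statement to the sign-consistency criterion, Theorem \ref{woven_Riesz_suff_condi}, by showing that continuity and the frame property force $f$ and $g$ to each have constant sign. Then $f/g$ is either strictly positive or strictly negative on $[0,1]$.

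\textbf{Step 1: frame bounds for $f$ and $g$.} Since $\Ec(f)$ and $\Ec(g)$ are frames, the fact above gives $a\le |f|,|g|\le b$ a.e. Continuity upgrades this to every $t\in[0,1]$. Indeed, if $|f(t_0)|<a$ at some point, then $|f|<a$ on a neighborhood of $t_0$, which has positive measure and contradicts the a.e. bound. So $f$ and $g$ vanish nowhere on $[0,1]$.

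\textbf{Step 2: constant signs.} $f$ and $g$ are real-valued, continuous and nonvanishing on the connected set $[0,1]$. By the intermediate value theorem each has a constant sign. Hence $f/g$ is continuous with constant sign, and $|f/g|\ge a/b>0$. So either $f/g\ge a/b$ everywhere or $f/g\le -a/b$ everywhere, and in particular $f(t)$ and $g(t)$ have consistent signs for every $t$.

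\textbf{Step 3: apply the woven criterion.} Theorem \ref{woven_Riesz_suff_condi} says that frames $\Ec(f)$ and $\Ec(g)$ whose signs are consistent a.e. form a woven frame. The hypotheses of the present theorem together with Steps 1--2 supply exactly this, which finishes the proof.

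\textbf{Main obstacle.} The only delicate point is how the hypothesis of Theorem \ref{woven_Riesz_suff_condi} is phrased. If it literally asks for $\operatorname{sgn} f=\operatorname{sgn} g$ a.e., Step 2 gives this only when both functions have the same sign. If they have opposite signs, I would replace $g$ by $-g$. Each weaving $\{fe_n\}_{n\in J}\cup\{-ge_n\}_{n\in J^c}$ has the same frame coefficients in absolute value as the corresponding weaving with $g$, so the frame property of every weaving is unchanged. Applying the theorem to the pair $(f,-g)$ then gives the result for $(f,g)$.
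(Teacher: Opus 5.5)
Your proposal is correct and follows the same route as the paper. The paper states this theorem as an immediate corollary of Theorem \ref{woven_Riesz_suff_condi}, using exactly the reduction you spell out: frame bounds plus continuity make $f$ and $g$ nonvanishing everywhere, so each has constant sign and $g/f$ has constant sign.

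The concern in your last paragraph is unnecessary but harmless. Theorem \ref{woven_Riesz_suff_condi} is stated for $g/f$ strictly positive \emph{or} strictly negative a.e., and its proof makes the same reduction you propose, rescaling by a nonzero scalar such as $-1$.
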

We then prove a full characterization of complete systems of weighted exponentials that form a wovenly complete system. Such a woven structure exists if and only if the Fourier coefficients of the generators of involved systems of weighted exponentials are ``highly overlapping" in the sense of Theorem \ref{Completeness_Characterization}. Here we say a system of weighted exponentials $\Ec(f)$ is \emph{complete} if  $\clspan\set{fe^{2\pi inx}\,|\,n\in \Z}=L^2[0,1].$ We say two complete systems of weighted exponentials, $\Ec(f)$ and $\Ec(g)$, form a \emph{wovenly complete system} if $\set{fe^{2 \pi i n\cdot x}}_{n \in J}\cup \set{ge^{2 \pi i n \cdot x}}_{n \in J^c}$ remains complete for every $J\subseteq \Z.$
 As a result, we obtain the following criterion that two complete systems of weighted exponentials fail to be wovenly complete.
\begin{theorem}
\label{necessary_woven_completeness}
    Let $f,g$ be two functions in $L^2[0,1]$. Then $\Ec(f)$ and $\Ec(g)$ do not form a wovenly complete system of weighted exponentials if $$\widehat{f}(n)\widehat{g}(n)=0 \quad \text{for all }n\in \Z$$
    where $\widehat{f}(n)$ denotea the $n$-th Fourier coefficient of $f.$
    
    In particular, any two systems of weighted exponentials generated by two trigonometric polynomials with disjoint spectra fail to form a wovenly complete system. \qeddef
\end{theorem}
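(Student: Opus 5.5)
Since $\widehat f(n)\widehat g(n)=0$ for every $n$, I would choose the weaving index set
$$J\Eq\set{n\in\Z\,:\,\widehat f(-n)=0}$$
and exhibit a nonzero vector orthogonal to the weaving $\set{fe^{2\pi int}}_{n\in J}\cup\set{ge^{2\pi int}}_{n\in J^c}$. For $n\in J^c$ we have $\widehat f(-n)\neq 0$, so the hypothesis forces $\widehat g(-n)=0$.

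The test vector is the constant function $h\equiv 1$. For every $n$,
$$\ip{fe^{2\pi in\cdot}}{1}\Eq\int_0^1 f(t)e^{2\pi int}\,dt\Eq\widehat f(-n),$$
and similarly $\ip{ge^{2\pi in\cdot}}{1}=\widehat g(-n)$. Hence for $n\in J$ the inner product with $fe^{2\pi int}$ equals $\widehat f(-n)=0$. For $n\in J^c$ the inner product with $ge^{2\pi int}$ equals $\widehat g(-n)=0$. So $1$ is a nonzero element of the orthogonal complement of this weaving, and the weaving is not complete. Therefore $\Ec(f)$ and $\Ec(g)$ do not form a wovenly complete system.

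If $\Ec(f)$ or $\Ec(g)$ is itself not complete, the conclusion is immediate anyway, since $J=\Z$ or $J=\emptyset$ is also a weaving. The argument above does not need this case separately.

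For the trigonometric-polynomial statement, take $f$ and $g$ to be trigonometric polynomials with disjoint spectra, i.e. disjoint sets of frequencies with nonzero coefficients. Then $\widehat f(n)\widehat g(n)=0$ for all $n$, and the first part applies.

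The only step that needs care is the sign convention $\ip{fe^{2\pi in\cdot}}{1}=\widehat f(-n)$; this is why $J$ is defined using $-n$. Alternatively, one could appeal to Theorem \ref{Completeness_Characterization}, but the direct orthogonality argument is self-contained.
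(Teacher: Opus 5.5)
Your proof is correct and is essentially the paper's argument. The paper obtains this statement as the case $p=1$ of Theorem \ref{Completeness_Characterization}: with $K=\set{n:\widehat f(n)\neq 0}$ one has $f\in\clspan\set{e_n}_{n\in K}$ and $g\in\clspan\set{e_n}_{n\in K^c}$. The proof of that direction then exhibits $\overline{p}$ (here your constant function $1$) as a nonzero vector orthogonal to the weaving indexed by the reflected set $(-K)^c=\set{n:\widehat f(-n)=0}$, which is exactly your $J$. Your direct version simply skips the $L^1$-closure formalism, which is unnecessary here because the hypothesis is already stated in terms of Fourier coefficients.
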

 Finally, we present several counterexamples showing that certain “seemingly correct” conditions for systems of weighted exponentials do not necessarily imply that the corresponding woven system retains the same properties as the original systems of weighted exponentials. 

This paper is organized as follows. In Section \ref{preliminaries}, we review the necessary background, including known results and notations used throughout this paper. The relationships between systems of weighted exponentials, systems of regular translates and Gabor system at crtical density will be specified in this section as well. We then present our main results in Section \ref{main_reusults} including strengthened version of Theorem \ref{sufficient_woven_frames_special_case}, \ref{necessary_woven_completeness} and some notable examples. Finally, we apply results from Section \ref{main_reusults} to systems of regular translates and Gabor system at critical density and highlight some results in such settings in Section \ref{application}.

\section{Preliminaries}
\label{preliminaries}
Throughout this paper, $L^2[0,1]$ denotes the Hilbert space of all (Lebesgue) square-integrable functions defined on $[0,1]$. By $\ip{\cdot}{\cdot}$ we mean the inner product associated with $L^2[0,1].$ The definitions of Fourier transform and inverse Fourier transform we adopt here are
$$\mathcal{F}(f)(\xi)=\widehat{f}(\xi)=\int_{\R} f(t)e^{-2\pi it\xi}\,dt~\text{  and  }~\mathcal{F}^{-1}(f)(\xi)=\check{f}(\xi)=\int_{\R} f(t)e^{2\pi it\xi}\,dt,$$
respectively, while for each $n\in \Z$ the $n$-th Fourier coefficient is defined to be 
$$\widehat{f}(n)=\ip{f}{e_n}=\int_{0}^1fe^{-2\pi i n x}\,dx,$$
where $e_n(x)=e^{2\pi inx}.$ For any function $f\in L^2([0,1])$ and any subset $J\subseteq\Z$ we denote by $\Ec(f,J)$ the subsequence $\set{fe^{2\pi inx}}_{n\in J}$ of $\Ec(f)$.
In the case that $J=\Z$, we will simply write $\Ec(f)$ instead of $\Ec(f,\Z^d).$ 

We now define below the notion of woven system of weighted exponentials. Recall that a sequence $\set{x_n}_{n\in I}$ is \emph{minimal} if there exists some sequence $\set{y_n}_{n\in I}$ such that $\ip{x_n}{y_m}=\delta_{mn}$ for all $m,n\in I$ (for example, see \cite[Lemma 5.4]{Heil11}). We say a sequence is \emph{exact} if it is complete and minimal.
\begin{definition}
\label{def_woven_system}
Let $f,g\in L^2[0,1].$ 
\begin{enumerate}
\setlength\itemsep{0.5em}
    \item [\textup{(a)}] The woven system associated with $\Ec(f)$ and $\Ec(g)$, $\Wc(f,g)$, is the collection of all weavings associated with $\Ec(f)$ and $\Ec(g)$. That is, 
    $$\Wc(f,g)=\set{\Ec(f,J)\cup\Ec(g,J^c)\,|\,J\subseteq \Z}.$$
    For convenience, we will simply denote $\Ec(f,J)\cup\Ec(g,J^c)$ by $W(f,g,J).$ 
    \item [\textup{(b)}] We say that $\Wc(f,g)$ is a \emph{woven frame} (respectively \emph{a woven Bessel sequence}, \emph{a woven orthonormal basis}) for $L^2[0,1]$ if every weaving in $\Wc(f,g)$ is a frame (respectively Bessel sequence, woven orthonormal basis) for $L^2[0,1]$. 

    \item [\textup{(c)}] We say that $\Wc(f,g)$ is \emph{wovenly complete} (respectively \emph{wovenly minimal}, \emph{wovenly  exact}) for $L^2[0,1]$ if every weaving in $\Wc(f,g)$ is complete (respectively minimal, exact) in $L^2[0,1]$. \qeddef
\end{enumerate}

\end{definition}
A necessary condition for a woven system associated with $\Ec(f)$ and $\Ec(g)$ to possess any of the properties listed in \ref{def_woven_system} is that both $\Ec(f)$ and $\Ec(g)$ individually satisfy that property. The following theorem (for example, see \cite[Theorem 10.10]{Heil11}) gives a complete characterization of systems of weighted exponentials that possess these properties. Recall that we say a frame for $L^2[0,1]$ is a \emph{Riesz basis} if the removal of any element from it leaves an incomplete set. 

\begin{theorem} \label{OneWeight}
\setlength\itemsep{0.3em}
    For any function $f\in L^2([0,1])$ the following statements hold.
    \begin{enumerate}
        \item [\textup{(a)}] $\Ec(f)$ is complete in $L^2[0, 1]$ if and only if $f(t)  \neq  0$ for a.e. $t$. 
        \medskip
        \item [\textup{(b)}] $\Ec(f)$ is minimal in $L^2[0, 1]$ if and only if $1/f \in L^2[0, 1]$. In this case $\Ec(f)$ is exact, and its biorthogonal system is $\Ec(\tilde{f})$ where $\tilde{f} = 1/\overline{f}$.
        \medskip
        \item [\textup{(c)}] $\Ec(f)$ is a Bessel sequence in $L^2[0, 1]$ if and only if $f \in L^\infty[0, 1]$. In this case $|f(t)|^2 \leq B$ a.e., where $B$ is a Bessel bound. 
        \medskip
        \item [\textup{(d)}] $\Ec(f)$ is a frame for $L^2[0, 1]$ if and only if there exist $A, B > 0$ such that $A \leq |f(t)|^2 \leq B$ for a.e. $x$. In this case $\Ec(f)$ is a Riesz basis for $L^2[0, 1]$. 
        \medskip
        \item [\textup{(e)}] $\Ec(f)$ is an orthonormal basis for $L^2[0, 1]$ if and only if $|f(t)| = 1$ for a.e. $t$. 
    \end{enumerate}
    \qeddef
\end{theorem}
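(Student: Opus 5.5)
The plan is to transfer every property to a statement about multiplication by $f$ composed with the orthonormal basis $\set{e_n}_{n\in\Z}$. By Parseval, for $h\in L^2[0,1]$,
$$\ip{h}{fe_n}=\ip{h\overline{f}}{e_n}=\widehat{h\overline{f}}(n).$$
This reduces each of (a)--(e) to a property of $h\mapsto h\overline{f}$ whenever $h\overline{f}\in L^2$. Since $h\overline{f}\in L^1$ always, we can at least say that the Fourier coefficients determine the $L^1$ function.

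For (a), suppose $h\perp fe_n$ for all $n$. Then all Fourier coefficients of $h\overline{f}\in L^1$ vanish, so $h\overline{f}=0$ a.e. by uniqueness of Fourier coefficients in $L^1$. If $f\neq 0$ a.e., this forces $h=0$. Conversely, if $f=0$ on a set $E$ of positive measure, then $h=\chi_E$ is a nonzero function orthogonal to everything.

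For (c), the upper frame inequality reads $\sum_n|\widehat{h\overline f}(n)|^2\le B\norm{h}^2$. If $f\in L^\infty$, the left side equals $\norm{h f}^2\le \norm{f}_\infty^2\norm{h}^2$. Conversely, test the inequality with $h=\chi_E$, where $E\subseteq\set{|f|^2>B}$. Since $\chi_E\overline f\in L^2$, Parseval gives $\int_E|f|^2\le B|E|$, forcing $|E|=0$. If we only know the Bessel property, we should first truncate to $E\cap\set{|f|\le N}$ so that Parseval applies; this yields $|f|^2\le B$ a.e.

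For (d), combine (c) with the same test-function argument on $\set{|f|^2<A}$ for the lower bound. Conversely, the bounds make $h\mapsto h\overline f$ an isomorphism, so $\set{fe_n}_{n\in\Z}$ is the image of an orthonormal basis under the bounded invertible operator $M_f$, hence a Riesz basis. Part (e) is the special case $A=B=1$: the frame is then tight with bound $1$ and consists of vectors of norm $\norm{f}_2=1$, and such a frame is an orthonormal basis. Alternatively, we can check $\ip{fe_n}{fe_m}=\widehat{|f|^2}(m-n)$, which equals $\delta_{mn}$ exactly when $|f|^2=1$ a.e.

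Part (b) is the main obstacle. If $1/f\in L^2$, then $\tilde f=1/\overline f$ satisfies $\ip{fe_n}{\tilde f e_m}=\int e_{n-m}=\delta_{mn}$, so the system is minimal. Moreover $f\neq0$ a.e., so it is complete by (a), and exactness follows. For the converse, suppose minimality gives a biorthogonal $y_0$ with $\ip{fe_n}{y_0}=\delta_{n0}$. Then $f\overline{y_0}\in L^1$ has Fourier coefficients $\delta_{n0}$, so $f\overline{y_0}=1$ a.e. by $L^1$ uniqueness. Hence $1/f=\overline{y_0}\in L^2$, and we also get $y_m=e_m/\overline f$. The delicate point throughout is to use $L^1$ uniqueness of Fourier coefficients rather than Parseval whenever $f$ is not known to be bounded.
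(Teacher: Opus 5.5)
Your proof is correct and complete. The paper does not prove this statement itself but quotes it from Heil's book (Theorem~10.10), and your argument is essentially the standard proof given there: the identity $\ip{h}{fe_n}=\widehat{h\overline{f}}(n)$ with $h\overline{f}\in L^1$, uniqueness of Fourier coefficients in $L^1$, and indicator test functions $\chi_E$. One step is superfluous: in (c) you do not need to truncate to $\set{|f|\le N}$, because $f\in L^2[0,1]$ already gives $\chi_E\overline{f}\in L^2[0,1]$ for every measurable $E$, so Parseval applies directly.
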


For each $k\in \Z$ let $T_k\colon L^2(\R)\rightarrow L^2(\R)$ be the shift operator defined by $(T_kg)(x)=g(x-k).$ A system of regular translates associated with $g\in L^2(\R)$ is the sequence $\set{T_{k}g}_{k\in \Z}.$ It is known that such a system can never be complete in $L^2(\R)$. In fact, no system of regular translates can span any closed subspace of $L^2(\R)$ that is closed under the Fourier transform (\cite{PY25}). Consequently, one of the major focus on systems of regular translates is determining what conditions allow a system of regular translates to possess desired approximation properties for ``its closed span". For more details, as well as more results on systems of translates, we refer to \cite[Section 10.4]{Heil11}.

Let $g\in L^2(\R)$. The \emph{fiberization} of $g$, denoted by $\Phi_g (\xi)$, is the $1$-periodic function $$\Phi_g (\xi)=\sum_{k\in \Z} |\widehat{g}(\xi+k)|^2  \qquad \text{for $\xi\in \R.$}$$
Note that $\Phi_g$ is finite for almost every $\xi\in \R$. Let $\mathcal{O}_g$ be the set of zeros of $\Phi_g$.
The \emph{fiberization map} $\Psi_g$ associated with $g\in L^2(\R)$ is an isometry from $\clspan\set{T_{k}g}_{k\in \Z}$ onto $H_{\Psi_g}=\set{f\in L^2([0,1])\,|\,f=0 \text{ on }  \Oc_g}$, defined by 
\begin{equation}
\label{fiberization_map}
\Psi_g\Bigparen{\,\sum_{k\in \Z} c_kT_kg\,}(\xi)\Eq \widehat{c}(\xi)\,\Phi_g^{1/2}(\xi),\qquad c=(c_k)_{k\in \Z}\in c_{00}(\Z),
\end{equation}
where $\widehat{c}(\xi)=\sum_{k\in \Z} c_ke^{2\pi ik\xi}$ denotes the Fourier transform of $c.$ Consequently, the fiberization map transforms a system of regular translates into a system of weighted exponentials. It follows that $\set{T_{k}g}_{k\in \Z}$ possesses certain property if and only if $\Ec(\Phi_g^{1/2})$ does as well.

Finally, we briefly illustrate the relationship between systems of weighted exponentials and Gabor systems at critical density. For detailed explanations, we refer to \cite[Section 11.7]{Heil11}.
Let $g \in L^2(\R)$. The Zak transform is the isometry from $L^2(\R)$ onto $L^2[0,1]^2$ defined by
    \begin{equation*}
        Zg(x, \xi) \EQ \sum_{j \in \Z} g(x-j) e^{2 \pi i j \xi}, \quad   (x, \xi) \in [0,1]^2,
    \end{equation*}
    where this series converges unconditionally in the norm of $L^2[0,1]^2$. The Gabor system associated with $g$ at critical density $\Gc(g,1,1)$ is the sequence $\set{M_nT_kg}_{n,k\in\Z}$, where $M_n\colon L^2(\R)\rightarrow L^2(\R)$ is the modulation operator defined by $(M_ng)(\xi)=e^{2\pi in\xi}g(\xi).$ It is known that, for each $n,k\in\Z$, 
    \begin{equation}
    \label{Zak_transform}
    \bigparen{Z(M_nT_kg)}(x,\xi)= e^{2\pi inx}e^{2\pi ik\xi}Z_g(x,\xi)\quad \text{for a.e. }x,\xi\in\R.
    \end{equation}
    Therefore, every Gabor system in $L^2(\R)$ at critical density is isometrically isomorphic to a two-dimensional system of weighted exponentials in $L^2[0,1]^2$. Although Theorem \ref{OneWeight} is stated only for $L^2[0,1]$, its results extend to $L^2[0,1]^d$ for any $d>1$ with only minor modifications to the proofs. As a result, we can characterize Gabor systems at critical density with certain properties via Theorem \ref{OneWeight}.


\section{Main Results}
\label{main_reusults}
In this section, we prove our main results and present several notable examples. Although we only provide proofs for $L^2[0,1]$, all results extend straightforwardly to $L^2[0,1]^d$. 

\subsection{Woven Bessel sequences}
We begin with woven Bessel sequence. Since the arguments are straightforward, we summarize the results in the following proposition and omit the proofs.

\begin{proposition}\label{charac_woven_Bessel}
Let $f$ and $g$ be functions in $L^2[0,1].$
\begin{enumerate}
\setlength\itemsep{0.5em}
    \item [\textup{(a)}] $\Wc(f,g)$ is a woven Bessel sequence if and only if both $f$ and $g$ are in $L^{\infty}[0,1]$.
    \item [\textup{(b)}] Let $\set{f_n}\inN$ and $\set{g_n}\inN$ be two sequences in $L^2[0,1]$ that converge to $f$ and $g$, respectively. Assume that both of $\sup_n\norm{f_n}_{L^\infty[0,1]}$ and $\sup_n\norm{g_n}_{L^\infty[0,1]}$ are finite. Then $\Wc(f,g)$ is a Woven Bessel sequence.
\end{enumerate}
\qeddef
\end{proposition}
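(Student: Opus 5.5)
For part (a), we would argue both directions directly from Theorem \ref{OneWeight}(c) and Plancherel's theorem for the orthonormal basis $\set{e_n}_{n\in\Z}$ of $L^2[0,1]$.

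\emph{Necessity.} Suppose $\Wc(f,g)$ is a woven Bessel sequence. Choosing $J=\Z$ gives the weaving $W(f,g,\Z)=\Ec(f)$, which is therefore Bessel, so $f\in L^\infty[0,1]$ by Theorem \ref{OneWeight}(c). Choosing $J=\emptyset$ gives $W(f,g,\emptyset)=\Ec(g)$, so likewise $g\in L^\infty[0,1]$.

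\emph{Sufficiency.} Assume $f,g\in L^\infty[0,1]$ and fix any $J\subseteq\Z$ and $h\in L^2[0,1]$. Note that $\ip{h}{fe_n}=\ip{h\overline{f}}{e_n}$ and similarly for $g$. Enlarging each sum to all of $\Z$ and applying Plancherel gives
\begin{equation*}
\sum_{n\in J}|\ip{h}{fe_n}|^2+\sum_{n\in J^c}|\ip{h}{ge_n}|^2 \Le \norm{h\overline{f}}^2+\norm{h\overline{g}}^2 \Le \bigparen{\norm{f}_\infty^2+\norm{g}_\infty^2}\norm{h}^2 .
\end{equation*}
So every weaving is Bessel. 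The bound $\norm{f}_\infty^2+\norm{g}_\infty^2$ does not depend on $J$, so it is in fact a common (uniform) Bessel bound.

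For part (b), we would reduce to part (a) by showing $f,g\in L^\infty[0,1]$. Set $M=\sup_n\norm{f_n}_{L^\infty}<\infty$. Since $f_n\to f$ in $L^2[0,1]$, some subsequence $f_{n_k}$ converges to $f$ almost everywhere. Each $f_{n_k}$ satisfies $|f_{n_k}|\le M$ a.e., and a countable union of null sets is null, so $|f|\le M$ a.e. The same argument gives $g\in L^\infty[0,1]$, with $\norm{g}_\infty\le\sup_n\norm{g_n}_\infty$. Part (a) then shows that $\Wc(f,g)$ is a woven Bessel sequence, with Bessel bound at most $\sup_n\norm{f_n}_\infty^2+\sup_n\norm{g_n}_\infty^2$.

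We do not expect a genuine obstacle. The only point needing care is in (b): convergence is in $L^2$ (we read "converge" as $L^2$-norm convergence), so the uniform bound passes to the limit only after extracting an a.e.-convergent subsequence.
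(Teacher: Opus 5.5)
Your proof is correct and complete. The paper omits this proof as straightforward, and your argument is the natural one it leaves to the reader: Theorem \ref{OneWeight}(c) applied to the weavings with $J=\Z$ and $J=\emptyset$ gives necessity; bounding every weaving by the Bessel sequence $\Ec(f)\cup\Ec(g)$, with the $J$-independent bound $\norm{f}_\infty^2+\norm{g}_\infty^2$, gives sufficiency; and an a.e.-convergent subsequence carries the uniform $L^\infty$ bound over to the limits in (b).
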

We remark that the assumption that $\sup_n\norm{f_n}_{L^\infty[0,1]}$ and $\sup_n\norm{g_n}_{L^\infty[0,1]}$ are finite in Proposition \ref{charac_woven_Bessel} is necessary. For example,  let $f_n(t) = (t^{\frac{1}{4}} + \frac{1}{n} )^{-1}$ and $g_n(t) = 1$, for all $n \in \mathbb{N}$. Then $f_n\rightarrow t^{-1/4}$ and $g_n\rightarrow 1$ in $L^2[0,1]$, respectively. But $W(t^{-1/4},1)$ is not a woven Bessel sequence in $L^2[0,1].$ 

\subsection{Woven orthonormal bases} By Theorem \ref{OneWeight}, a system of weighted exponentials $\Ec(f)$ is an orthonormal basis for $L^2[0,1]$ if and only if $|f(t)|=1$ for almost every $t\in [0,1].$ This unique feature allows us to prove a complete characterization of woven orthonormal basis as follows. 

\begin{theorem}
\label{ONB_Characterization} Let $f$ and $g$ be functions in $L^2[0,1].$ Then 
$\Wc(f, g)$ is a woven orthonormal bases in $L^2[0,1]$ if and only if $|f(t)| = |g(t)| = 1$ a.e., and $f \overline{g}(t) = e^{i \theta}$ for some $\theta \in \mathbb{R}$. 
\end{theorem}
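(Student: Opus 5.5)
We prove the two directions separately; the reverse implication is a direct computation.

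\emph{Sufficiency.} Assume $|f|=|g|=1$ a.e.\ and $f\overline{g}=e^{i\theta}$ a.e. Since $|g|=1$, this gives $f=e^{i\theta}g$ a.e. Fix any $J\subseteq\Z$. The weaving $W(f,g,J)$ is then $\set{e^{i\theta}ge_n}_{n\in J}\cup\set{ge_n}_{n\in J^c}$. This is obtained from $\Ec(g)$ by multiplying some of its elements by the unimodular constant $e^{i\theta}$. By Theorem \ref{OneWeight}(e), $\Ec(g)$ is an orthonormal basis. Multiplying individual vectors of an orthonormal basis by unimodular scalars preserves orthonormality and completeness. Hence every weaving is an orthonormal basis.

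\emph{Necessity.} Assume every weaving is an orthonormal basis.
\begin{enumerate}
\item Taking $J=\Z$ and $J=\emptyset$ shows that $\Ec(f)$ and $\Ec(g)$ are orthonormal bases. Theorem \ref{OneWeight}(e) then gives $|f|=|g|=1$ a.e.
\item Take $m\ne n$ and $J=\set{m}$. Orthogonality of $fe_m$ and $ge_n$ in this weaving gives
$$0=\ip{fe_m}{ge_n}=\int_0^1 f\overline{g}\,e^{2\pi i(m-n)x}\,dx=\widehat{f\overline{g}}(n-m).$$
\item Since $m\ne n$ are arbitrary, every Fourier coefficient $\widehat{h}(k)$ with $k\neq 0$ vanishes, where $h=f\overline{g}\in L^\infty\subset L^2$.
\item By uniqueness of Fourier series, $h$ equals the constant $\widehat{h}(0)$ a.e.
\item Since $|h|=|f||g|=1$ a.e., this constant is unimodular, say $e^{i\theta}$.
\end{enumerate}

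There is no real obstacle. The only point needing care is that the weaving $J=\set{m}$ is an orthonormal basis, so in particular any two of its distinct elements are orthogonal. We only need orthonormality, not completeness, for necessity.
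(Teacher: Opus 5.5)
Your proof is correct. The necessity direction matches the paper's: restrict to $J=\Z$ and $J=\emptyset$ to get $|f|=|g|=1$, then use cross-orthogonality $\ip{fe_m}{ge_n}=0$ to kill every nonzero Fourier coefficient of $f\overline{g}$. You are slightly more explicit than the paper in naming a weaving, $J=\set{m}$, that actually contains both $fe_m$ and $ge_n$. Your index $n-m$ is also the right one; the paper writes $e_{m-n}$, a harmless sign slip.

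The sufficiency direction is where you genuinely differ. You note that $f\overline{g}=e^{i\theta}$ together with $|g|=1$ forces $f=e^{i\theta}g$. Each weaving is then just $\Ec(g)$ with the unimodular factor $e^{i\theta}$ attached to the vectors indexed by $J$, so the orthonormal basis property is inherited immediately. The paper instead checks orthonormality by computing the within-piece and cross-piece inner products, and proves completeness separately. If $h$ is orthogonal to the weaving, then $h\overline{f}\in\clspan\set{e_n}_{n\in J^c}$ and $h\overline{g}\in\clspan\set{e_n}_{n\in J}$ are orthogonal. Hence $\int_0^1|h|^2\,\overline{f}g\,dt=0$, which forces $h=0$.

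Your route is shorter and exposes the structure: a woven orthonormal basis is a single orthonormal basis with phases changed on a subset. The paper's completeness argument, on the other hand, uses only that $f\overline{g}$ is nonvanishing with a fixed phase, not that it is constant. That is why it carries over verbatim to Proposition~\ref{Completeness_SSign} (bounded $f,g$ with $f\overline{g}>0$ or $f\overline{g}<0$ a.e.), whereas your reduction to a rescaled basis has no analogue there.
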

\begin{proof}
($\Rightarrow$) Assume $\Wc(f,g)$ is a woven orthonormal basis. Since $\mathcal{E}(f) = \Wc(f, g, \mathbb{Z})$ and $\mathcal{E}(g) = \Wc(f,g, \emptyset)$ are  orthonormal bases, we have that $|f| = |g| = 1$ a.e. by Theorem \ref{OneWeight} (e). Moreover, we have
\begin{align*}
0 \EQ \ip{fe_m}{ge_n}  \EQ \int_{0}^{1}f \overline{g}(t) e^{2 \pi i (m - n)t} dt \EQ \ip{f\overline{g}}{e_{m - n}}, \quad \text{for all}\ m \neq n, 
\end{align*}
where we have used the fact that $\Wc(f, g, J)$ is an orthonormal basis for every $J \subset \mathbb{Z}$. By the uniqueness of the Fourier transform, it follows that $f \overline{g}$ equals to some constant with modulus $1$.  

\medskip
($\Leftarrow$)
Conversely, assume $|f| = |g| = 1$ a.e. and $f \overline{g} = e^{i \theta}$. By Theorem \ref{OneWeight}, this implies that $\mathcal{E}(f)$ and $\mathcal{E}(g)$ are orthonormal bases. Next, let $J \subset \mathbb{Z}$ be a subset. We will show that $\Wc(f,g,J)$ is a complete orthonormal sequence. To this end, for any $m \in J$ and $n \in J^c$ we compute
\begin{align*}
\ip{fe_m}{ge_n} \EQ \int_{0}^{1} f \overline{g}(t)e^{2 \pi i (m - n)t} dt \EQ e^{i \theta}\ip{e_m}{e_n} \EQ 0.  
\end{align*}
Likewise, for $m, m^{'}\in J$ with $m \neq m^{'}$, we have
\begin{align*}
\ip{fe_m}{fe_{m^{'}}} \EQ \int_{0}^{1}|f(t)|^2 e^{2 \pi i (m - m^{'})t}dt \EQ \ip{e_m}{e_{m^{'}}} \EQ 0.     
\end{align*}
The same argument shows that $\ip{ge_n}{ge_{n^{'}}} = 0$, for all $n, n^{'} \in J^c$ with $n\neq n'$. This proves that $\Wc(f, g , J)$ is an orthonormal sequence. To prove completeness, assume that $h \in L^2[0,1]$ is such that
\begin{align*}
\ip{h}{fe_m} \EQ 0 \EQ \ip{h}{ge_n}, \quad \text{for all } m \in J \text{ and } n \in J^c.  
\end{align*}
It follows that
\begin{align*}
h \overline{f} \in \clspan\{e_n\}_{n \in J^c} \quad \text{and} \quad h \overline{g}  \in \clspan\{e_m\}_{m \in J}.
\end{align*}
Note that since $f, g \in L^{\infty}[0,1]$, we have $h\overline{f}, h \overline{g} \in L^2[0,1]$ . Finally, by the orthogonality of $h \overline{f}$ and $h \overline{g}$, we obtain
\begin{align*}
0=\int_{0}^{1}|h(t)|^2 f \overline{g}(t) \hspace{0.05cm} dt \EQ e^{i \theta}\int_{0}^{1}|h(t)|^2 \hspace{0.05cm} dt. \quad 
\end{align*}
Therefore, $h$ must be the zero function. It follows that $\Wc(f,g , J)$ is an orthonormal basis. Thus, $\Wc(f,g)$ is a woven orthonormal basis.
\end{proof}

We have seen, in the example following Proposition \ref{charac_woven_Bessel}, that woven Bessel sequences (in fact, even woven frames) are not stable under limiting process. The next proposition shows that this is not case for woven orthonormal bases. Since the proof is straightfoward, we omit the details.

\begin{proposition}
Let $\set{f_n}\inN$ and $\set{g_n}\inN$ be sequences in $L^2[0,1]$ such that $\Wc(f_n, g_n)\inN$ is a woven orthonormal bases for all $n\in\N$. Assume that $f_n$ and $g_n$ converge to $f$ and $g$ in $L^2[0,1]$, respectively. Then $\Wc(f,g)$ is a woven orthonormal basis. \qeddef
\end{proposition}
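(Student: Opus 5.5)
By Theorem \ref{ONB_Characterization}, it suffices to show that $|f|=|g|=1$ a.e.\ and that $f\overline{g}$ is a.e.\ equal to a unimodular constant $e^{i\theta}$. The plan is to pass each of these conditions, which hold for every $n$, to the $L^2$ limit.

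First apply Theorem \ref{ONB_Characterization} to each woven orthonormal basis $\Wc(f_n,g_n)$. This gives $|f_n|=|g_n|=1$ a.e.\ and $f_n\overline{g_n}=e^{i\theta_n}$ a.e.\ for some $\theta_n\in\R$.

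Since $f_n\to f$ in $L^2[0,1]$, a subsequence converges pointwise a.e. Passing to a further subsequence, we may assume $f_{n_k}\to f$ and $g_{n_k}\to g$ a.e. Then $|f|=\lim_k|f_{n_k}|=1$ a.e., and likewise $|g|=1$ a.e.

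For the product, the cleanest route avoids pointwise limits of the constants $\theta_n$. Because $|g_n|=1$, we have
$$\norm{f_n\overline{g_n}-f\overline{g}}_{L^2}\Le \norm{(f_n-f)\overline{g_n}}_{L^2}+\norm{f(\overline{g_n}-\overline{g})}_{L^2}\Le \norm{f_n-f}_{L^2}+\norm{f}_{L^\infty}\norm{g_n-g}_{L^2}.$$
Here $\norm{f}_{L^\infty}=1$ by the previous step. Hence the constant functions $e^{i\theta_n}$ converge in $L^2[0,1]$ to $f\overline{g}$.

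The constant functions form a closed (one-dimensional) subspace of $L^2[0,1]$, so $f\overline{g}$ is a.e.\ equal to a constant $c$. Moreover $|c|=|f||g|=1$ a.e., so $c=e^{i\theta}$ for some $\theta\in\R$. Theorem \ref{ONB_Characterization} then shows that $\Wc(f,g)$ is a woven orthonormal basis.

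The only delicate point is that $L^2$ convergence does not directly control products. This is handled by the uniform bound $|g_n|=1$ together with the $L^\infty$ bound on $f$ obtained in the second step, which is why that step must come first.
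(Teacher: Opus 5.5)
Your argument is correct. The paper omits this proof as straightforward, and your route is the natural one it has in mind: reduce to Theorem \ref{ONB_Characterization}, then pass $|f_n|=|g_n|=1$ and $f_n\overline{g_n}=e^{i\theta_n}$ to the limit, controlling the product through $|g_n|=1$ and the already-established $\|f\|_{L^\infty}=1$. (Alternatively, along your a.e.-convergent subsequence you have $e^{i\theta_{n_k}}=f_{n_k}\overline{g_{n_k}}\to f\overline{g}$ a.e., which gives the constancy of $f\overline{g}$ directly without the $L^2$ product estimate.)
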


\subsection{Woven completeness and minimality} 
Next, we shift our focus to woven completeness and minimality.

It was revealed in the proof of Theorem \ref{ONB_Characterization} that two systems of weighted exponentials form a wovenly complete system if the signs of two generators are consistent. The proof is similar to the second half of the proof of Theorem \ref{ONB_Characterization}, so we omit it here.
\begin{proposition}\label{Completeness_SSign}
    Let $f$ and $g$ be functions in $ L^\infty[0, 1]$. Assume that  either $f\overline{g} > 0$ a.e. or $f \overline{g} < 0$ a.e.. Then $\Wc(f, g)$ is wovenly complete in $L^2[0, 1]$.
\end{proposition}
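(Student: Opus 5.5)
Fix an arbitrary $J\subseteq\Z$. We show that $W(f,g,J)=\Ec(f,J)\cup\Ec(g,J^c)$ is complete by mimicking the completeness half of the proof of Theorem \ref{ONB_Characterization}. So let $h\in L^2[0,1]$ satisfy
\[
\ip{h}{fe_m}\Eq 0 \quad (m\in J), \qquad \ip{h}{ge_n}\Eq 0 \quad (n\in J^c).
\]
We will prove that $h=0$ a.e.

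Since $f,g\in L^\infty[0,1]$, both $h\overline{f}$ and $h\overline{g}$ lie in $L^2[0,1]$. The relations above say that $\ip{h\overline f}{e_m}=0$ for $m\in J$ and $\ip{h\overline g}{e_n}=0$ for $n\in J^c$. Hence
\[
h\overline f\in\clspan\{e_n\}_{n\in J^c}, \qquad h\overline g\in\clspan\{e_m\}_{m\in J}.
\]
These two closed subspaces are orthogonal, so $\ip{h\overline g}{h\overline f}=0$. Writing out this inner product gives
\[
0\Eq\int_0^1 h\overline g\,\overline{h}f\,dt\Eq\int_0^1 |h(t)|^2\, f\overline g(t)\,dt .
\]

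Now use the sign hypothesis. If $f\overline g>0$ a.e., the integrand $|h|^2f\overline g$ is real and nonnegative almost everywhere. Its vanishing integral therefore forces $|h|^2 f\overline g=0$ a.e., and since $f\overline g\neq0$ a.e., we get $h=0$ a.e. The case $f\overline g<0$ a.e. is identical after replacing the integrand by its negative. Since $J$ was arbitrary, every weaving is complete, i.e. $\Wc(f,g)$ is wovenly complete.

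There is no real obstacle here. The only points needing care are the following.
\begin{itemize}
\item The hypothesis $f\overline g>0$ includes the requirement that $f\overline g$ be real-valued a.e.
\item We need $h\overline f,h\overline g\in L^2$ so that membership in the closed spans makes sense; this is exactly where the $L^\infty$ assumption is used.
\item No separate completeness of $\Ec(f)$ or $\Ec(g)$ is needed, since $f\overline g\neq0$ a.e. already gives $f,g\neq0$ a.e.
\end{itemize}
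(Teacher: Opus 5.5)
Your proof is correct and is essentially the paper's argument. The paper omits the proof, calling it the same as the completeness half of Theorem \ref{ONB_Characterization}: you place $h\overline f$ and $h\overline g$ in the orthogonal subspaces $\clspan\{e_n\}_{n\in J^c}$ and $\clspan\{e_m\}_{m\in J}$, then conclude $h=0$ from $\int_0^1 |h|^2 f\overline g\,dt=0$ and the sign condition, with the sign condition replacing the constant $f\overline g=e^{i\theta}$ used there.
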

Proposition \ref{Completeness_SSign} provides a convenient sufficient condition for woven completeness. Nonetheless, as shown in the following example, it is not necessary. Recall that the Kadets $\frac{1}{4}$-Theorem (\cite{Kadets64}) states that if $\set{\lambda_n}_{n\in\mathbb{\Z}}$ is a sequence of real numbers satisfying $\sup_{n\in\mathbb{Z}}|\lambda_n-n|<\frac{1}{4},$ then $\set{e^{2\pi i\lambda_nt}}_{n\in\mathbb{Z}}$ is a Riesz basis for $L^2[0,1].$ 
\begin{example}
\label{non_real_form_woven_Riesz}
    Let $f(t) = e^{2 \pi i t / 5}$ and $g(t) = e^{2 \pi it / 6}$. By Kadets $1/4$-Theorem (\cite{Kadets64}), $\Wc(f, g, J)$ is not only complete but a Riesz basis for $L^2[0, 1]$ for any subset $J\subseteq\Z$. However,
       $ f\overline{g} \Eq e^{2 \pi i t/30}.$
    \qeddef
\end{example}

The multiplication by $e^{-2\pi int}$ is equivalent to shifting the spectrum by $n$ positions. In the notion of woven systems, we consider all possible spectral shifts of the generators involved in the weaving process. Consequently, for a system to be wovenly complete, the spectra of the generators must exhibit a significant degree of overlap, ensuring that exponential term is not absent in any weaving regardless of how the spectrum is shifted. We confirm that this is true in the following sense. We first establish a required lemma.

\begin{lemma}\label{Lemma_Completeness}
    Let $f \in L^1[0, 1]$ and let $J \subseteq \Z$ be any subset. If $\hat{f}(n) = 0$ for all $n \in J$, then
    \begin{equation*}
        f \in \overline{\text{span}}\set{e_n}_{n \in J^c},
    \end{equation*}
    in $L^1[0, 1]$. 
\end{lemma}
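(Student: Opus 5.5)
We will approximate $f$ in $L^1[0,1]$ by trigonometric polynomials whose spectra lie in the support of $\widehat{f}$. Such polynomials automatically lie in $\operatorname{span}\set{e_n}_{n\in J^c}$. The natural tool is Fejér summation. For $N\in\N$ let
$$K_N(t)\Eq\sum_{|n|\le N}\Bigparen{1-\tfrac{|n|}{N+1}}e_n(t)$$
be the Fejér kernel, and consider the Cesàro means
$$\sigma_N f(t) \Eq (f*K_N)(t)\Eq \sum_{|n|\le N}\Bigparen{1-\tfrac{|n|}{N+1}}\widehat{f}(n)\,e_n(t),$$
where the convolution is taken on the torus $[0,1)$, identifying $f$ with its $1$-periodic extension.

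First, we note that each $\sigma_N f$ is a finite linear combination of exponentials $e_n$ with $\widehat{f}(n)\neq 0$. By hypothesis, every such $n$ belongs to $J^c$, since $\widehat{f}(n)=0$ for all $n\in J$. Hence $\sigma_N f\in \operatorname{span}\set{e_n}_{n\in J^c}$ for every $N$.

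Second, we invoke the classical fact that $\set{K_N}_{N\in\N}$ is a good (approximate identity) kernel. Concretely, $K_N\ge 0$ and $\int_0^1 K_N=1$, and for every $\delta>0$ the mass of $K_N$ on $\delta\le |t|\le 1/2$ tends to $0$ as $N\to\infty$. From these properties and the continuity of translation in $L^1[0,1]$, that is, $\norm{f(\cdot-s)-f}_{L^1}\to 0$ as $s\to 0$, Minkowski's integral inequality gives
$$\norm{\sigma_N f-f}_{L^1}\Le\int_0^1 K_N(s)\,\norm{f(\cdot-s)-f}_{L^1}\,ds\;\longrightarrow\;0 .$$
Therefore $f$ is an $L^1$-limit of elements of $\operatorname{span}\set{e_n}_{n\in J^c}$, so $f\in\clspan\set{e_n}_{n\in J^c}$ in $L^1[0,1]$, which is the claim.

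The only substantive step is the $L^1$ convergence of Fejér means. We cite it as standard, and could otherwise verify it by splitting the integral into $|s|<\delta$ and $|s|\ge\delta$. It is essential to use Fejér means rather than ordinary partial sums $S_Nf$, because $S_Nf$ need not converge to $f$ in $L^1$.
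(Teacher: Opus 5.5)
Your proof is correct and is essentially the paper's argument: both convolve $f$ with an approximate identity, use $\widehat{f*\varphi}(n)=\widehat{f}(n)\widehat{\varphi}(n)$ to keep the spectrum of each approximant inside $J^c$, and pass to the $L^1$ limit. The only difference is the choice of kernel. The paper uses a smooth kernel, so it must first place $f*\varphi_k$ in the $L^2$-closed span and then pass to the $L^1$-closed span via $\norm{\cdot}_{L^1}\Le\norm{\cdot}_{L^2}$. Your Fej\'er kernel is itself a trigonometric polynomial, so each approximant already lies in $\text{span}\set{e_n}_{n\in J^c}$ and that intermediate step is not needed.
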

\begin{proof}
 Let $\set{\varphi_k}_{k \in \N} \subseteq C^\infty[0, 1]$ be an approximate identity such that $f_k = f \ast \varphi_k$ converges to $f$ in $L^1[0,1]$. Since $\hat{f}(n) = 0$ for all $n\in J$, we have $\widehat{f_k}(n) = \hat{f}(n) \widehat{\varphi}_k(n)=0 $ for all $n\in J.$ Consequently, $f_k \in \overline{\text{span}}\set{e_n}_{n \in J^c}$ in $L^2[0, 1]$ for all $k\in \N$. As a result, we have that  $f_k \in \overline{\text{span}}\set{e_n}_{n \in J^c}$ in $L^1[0, 1]$ by Cauchy–Bunyakovsky–Schwarz inequality,  Finally, since $f_k$ converges to $f$ in $L^1[0, 1]$ and $\overline{\text{span}}\set{e_n}_{n \in J^c}$ is closed in $L^1[0, 1]$, it follows that $f \in \overline{\text{span}}\set{e_n}_{n \in J^c}$ in $L^1[0, 1]$.
\end{proof}

\medskip
We are now ready to prove the main result of this subsection, a complete characterization of woven completeness. 

\medskip
\begin{theorem}\label{Completeness_Characterization}
    Let $f$ and $g$ be functions in $L^2[0, 1]$. Then $\Wc(f, g)$ is wovenly complete in $L^2[0, 1]$ if and only if both $f$ and $g$ are nonzero a.e. and for every nonzero $p \in L^2[0, 1]$, there does not exist any $J \subseteq \Z$ such that
    \begin{equation*}
        pf \in \overline{\text{span}}\set{e_n}_{n \in J} \qquad \text{and} \qquad pg \in \overline{\text{span}}\set{e_n}_{n \in J^c},
    \end{equation*}
    in $L^1[0, 1]$.
\end{theorem}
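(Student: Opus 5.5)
The plan is to reduce woven completeness to a statement about annihilating functions: a weaving $W(f,g,J)$ fails to be complete exactly when some nonzero $h\in L^2[0,1]$ is orthogonal to every element of it. Such an $h$ then produces the function $p=\overline{h}$ required in the theorem.

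\emph{($\Rightarrow$).} Assume $\Wc(f,g)$ is wovenly complete. Since $\Ec(f)=W(f,g,\Z)$ and $\Ec(g)=W(f,g,\emptyset)$ are complete, Theorem \ref{OneWeight}(a) gives that $f$ and $g$ are nonzero a.e.

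Suppose, for contradiction, that there are a nonzero $p\in L^2[0,1]$ and a set $J\subseteq\Z$ with
\begin{equation*}
pf\in\clspan\set{e_n}_{n\in J}, \qquad pg\in\clspan\set{e_n}_{n\in J^c}
\end{equation*}
in $L^1[0,1]$. The map $u\mapsto \widehat{u}(n)$ is a continuous linear functional on $L^1[0,1]$, and it vanishes on every $e_m$ with $m\neq n$. Hence it vanishes on the $L^1$-closed span of those exponentials. It follows that $\widehat{pf}(n)=0$ for all $n\in J^c$ and $\widehat{pg}(n)=0$ for all $n\in J$.

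Put $h=\overline{p}\in L^2[0,1]$, which is nonzero. For $n\in J$ we get $\ip{h}{ge_n}=\int \overline{p}\,\overline{g}\,e^{-2\pi i n t}\,dt=\overline{\widehat{pg}(-n)}$. Because of this sign flip, I will apply the hypothesis to the pair $(-J)^c$ and $-J$, or equivalently reindex. Concretely, I will choose the weaving $W(f,g,-J^c)$, so that the $f$-indices are $-J^c$ and the $g$-indices are $-J$. Then $\ip{h}{fe_n}=\overline{\widehat{pf}(-n)}=0$ for $n\in -J^c$, and $\ip{h}{ge_n}=0$ for $n\in -J$. So this weaving is incomplete, which is a contradiction. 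The only care needed here is to track the index sets correctly.

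\emph{($\Leftarrow$).} Assume $f$ and $g$ are nonzero a.e. and that the spectral condition holds, but some weaving $W(f,g,J)$ is incomplete. Then there is a nonzero $h\in L^2[0,1]$ with $\ip{h}{fe_n}=0$ for $n\in J$ and $\ip{h}{ge_n}=0$ for $n\in J^c$.

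Set $p=\overline{h}$. Then $pf$ and $pg$ lie in $L^1[0,1]$ by Cauchy--Schwarz, and
\begin{equation*}
\widehat{pf}(-n)=\overline{\ip{h}{fe_n}}=0 \quad (n\in J), \qquad \widehat{pg}(-n)=0 \quad (n\in J^c).
\end{equation*}
So $\widehat{pf}$ vanishes on $-J$ and $\widehat{pg}$ vanishes on $-J^c$. By Lemma \ref{Lemma_Completeness}, $pf\in\clspan\set{e_n}_{n\in(-J)^c}$ and $pg\in\clspan\set{e_n}_{n\in -J}$ in $L^1[0,1]$.

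Now let $K=(-J)^c=-(J^c)$, so that $K^c=-J$. Then $pf\in\clspan\set{e_n}_{n\in K}$ and $pg\in\clspan\set{e_n}_{n\in K^c}$, with $p\neq0$. This contradicts the hypothesis.

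\emph{Main obstacle.} I expect the main difficulty to be the passage between ``Fourier coefficients vanish on a set'' and ``membership in the $L^1$-closed span''. For $L^1$ functions this is not automatic, and it is exactly what Lemma \ref{Lemma_Completeness} provides in one direction. The converse direction is the continuity argument for $u\mapsto \widehat{u}(n)$ given above. Note that $p$ lies only in $L^2$, so $pf$ lies only in $L^1$ and not in $L^2$; this is why the closed spans are taken in $L^1$.
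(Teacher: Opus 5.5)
Your proposal is correct and follows essentially the same route as the paper. In one direction, continuity of $u\mapsto\widehat{u}(n)$ on $L^1[0,1]$ shows that a forbidden pair $(p,J)$ makes $\overline{p}$ annihilate the weaving $W(f,g,(-J)^c)$. In the other, Lemma \ref{Lemma_Completeness} applied to $p=\overline{h}$ produces a forbidden pair for the index set $(-J)^c$. Your index bookkeeping is, if anything, cleaner than the paper's.
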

\begin{proof}
    Suppose there exist a nonzero $p \in L^2[0, 1]$ and $J \subseteq \Z$ such that
    \begin{equation*}
        pf \in \overline{\text{span}}\set{e_n}_{n \in J} \qquad \text{and} \qquad pg \in \overline{\text{span}}\set{e_n}_{n \in J^c},
    \end{equation*}
    in $L^1[0, 1]$. Let $\set{f_k}_{k \in \N} \subseteq \text{span}\set{e_n}_{n \in J}$ be a sequence such that $\norm{f_k - pf}_{1} \to 0$ as $k \to \infty.$ Since the Fourier transform $\mathcal{F}: L^1[0, 1] \to c_0(\Z)$ is bounded, we have $\widehat{f_k}(n) \to \widehat{pf}(n)$ for each $n \in \Z$. It follows that $\widehat{pf}(n) = 0$ for all $n \in J^c$. The same argument applied to $pg$ yields $\widehat{pg}(n) = 0$ for all $n \in J$. Let $I = (-J)^c$. Note that for each $n \in I$ we have $-n \in J^c$. Consequently, for each $n\in J$ we have
    \begin{equation*}
        \ip{\overline{p}}{fe_n} \Eq \ip{\overline{pf}}{e_n} \Eq \overline{\ip{pf}{e_{-n}}} \Eq 0.
    \end{equation*}
    Likewise, for eahc $n \in I^c$ we have $-n \in J$. Therefore,
    \begin{equation*}
        \ip{\overline{p}}{ge_n} \Eq \ip{\overline{pg}}{e_n} \Eq \overline{\ip{pf}{e_{-n}}} \Eq 0.
    \end{equation*}
    Therefore, the weaving $\Wc(f, g, I)$ is not complete. 

    Conversely, assume that $\Wc(f, g)$ is not complete. If either $f$ or $g$ vanishes on a set of positive measure, then either $\Wc(f,g,\Z)$ or $\Wc(f,g,\emptyset)$ is not complete. Otherwise, suppose that $f$ and $g$ are nonzero a.e.. By the incompleteness of $\Wc(f, g)$ there exists a nonzero function $h \in L^2[0, 1]$ and $I \subseteq \Z$ such that
    \begin{equation*}
        \ip{h}{fe_n} \Eq 0, \quad \forall n \in I \qquad \text{and} \qquad \ip{h}{ge_n} \Eq 0, \quad \forall n \in I^c.
    \end{equation*}
    Let $p = \overline{h}$ and let $J = (-I)^c$. For any $n \in I$ we have
    \begin{equation*}
        0=\ip{h}{fe_n} \Eq \ip{\overline{pf}}{e_n} \Eq \overline{\ip{pf}{e_{-n}}},
    \end{equation*}
    which implies $\widehat{pf}(n) = 0$ for all $n \in -I$. Applying Lemma \ref{Lemma_Completeness}, we conclude
    \begin{equation*}
        pf \in \overline{\text{span}}\set{e_n}_{n \in (-I)^c} \Eq\overline{\text{span}}\set{e_n}_{n \in J}
    \end{equation*}
    in $L^1[0, 1]$. Arguing similarly, we obtain
    \begin{equation*}
        pg \in \overline{\text{span}}\set{e_n}_{n \in -I} \Eq \overline{\text{span}}\set{e_n}_{n \in J^c}
    \end{equation*}
    in $L^1[0, 1]$. This concludes the proof. 
\end{proof}

\begin{remark}
Let $f$ and $g$ be nonzero functions in $L^2[0,1]$ such that $f\overline{g}>0$ (or $f\overline{g}<0$).
    By Proposition \ref{Completeness_SSign} and Theorem \ref{Completeness_Characterization}, we observe that the spectra of $f$ from the spectra of $g$ are never disjoint in any weighted $L^2[0,1]$, provided the weight belongs to $L^2[0,1].$\qeddef
\end{remark}

We present an illustrative example demonstrating an application of Theorem \ref{Completeness_Characterization} below.
\begin{example}
\label{application_woven_comple}
    Let $f(t) = e^{2\pi i k t} + a e^{2\pi i m t}$ and $g(t) = e^{2\pi i k t} + b e^{2\pi i m t}$
    for some distinct integers $k, m$ and some complex numbers $a, b$ with $|a| \leq 1$ and $|b| \leq 1$. We will show that $\Wc(f, g)$ is wovenly complete. Note that $\Wc(f, g)$ is wovenly complete if and only if $\Wc(e^{2\pi i \ell t}f, e^{2\pi i \ell t}g)$ is wovenly complete for any $\ell \in \Z$. Consequently, we may assume that $k = 0$, so that $f = 1 + a e^{2\pi i m t}$ and $g = 1 + b e^{2\pi i m t}$.

    Suppose, to the contrary, that there exists a nonzero $p \in L^2[0, 1]$ such that $pf$ and $pg$ have disjoint spectra. Write $c_n = \widehat{p}(n)$ for $n \in \Z$. Since $f = 1 + a e^{2\pi i m t}$ and $g = 1 + b e^{2\pi i m t}$, we have, for every $n \in \Z$,
    \begin{equation*}
        \widehat{pf}(n) \EQ c_n + a\, c_{n-m}
        \qquad \text{and} \qquad
        \widehat{pg}(n) \EQ c_n + b\, c_{n-m}.
    \end{equation*}
    Because $pf$ and $pg$ have disjoint spectra, for each $n \in \Z$ at least one of
    $\widehat{pf}(n)$ and $\widehat{pg}(n)$ vanishes, that is,
    \begin{equation*}
        c_n + a\, c_{n-m} \EQ 0
        \qquad \text{or} \qquad
        c_n + b\, c_{n-m} \EQ 0.
    \end{equation*}
    In either case $c_n = -a\, c_{n-m}$ or $c_n = -b\, c_{n-m}$, so
    \begin{equation*}
        |c_n| \Le \max\set{|a|, |b|}\, |c_{n-m}| \Le |c_{n-m}|
        \qquad \text{for all } n \in \Z.
    \end{equation*}
    Iterating the above gives $|c_n| \leq |c_{n-km}|$ for every $k \in \N$. Since $p \in L^2[0, 1] \subseteq L^1[0, 1]$, the Riemann--Lebesgue lemma gives $c_{n-km} \to 0$ as $k \to \infty$, so $c_n = 0$.
    As $n \in \Z$ was arbitrary, $p = 0$ by uniqueness of Fourier coefficients, which is a contradiction. Thus there does not exist such a function $p$, and by Theorem \ref{Completeness_Characterization}, $\Wc(f, g)$ is wovenly complete. \qeddef
\end{example}

We now turn to the part of woven minimality. By Theorem \ref{OneWeight} (b), minimality is a stronger property than completeness in the setting of systems of weighted exponentials. Consequently, one might expect that woven minimality would imply woven completeness. We confirm this in the following results, provided that two involved systems of weighted exponentials are frames for $L^2[0,1].$ In fact, we prove that woven completeness is equivalent to a weaker type of woven minimality, which we define below.

\begin{definition}\label{L2_Independence_Def} Let $f$ and $g$ be functions in $L^2[0,1]$. We say that $\Wc(f,g)$ is \emph{wovenly $\ell^2$-minimal}, if for any $J\subseteq \Z$ and any sequence of scalars $(c_n)_{n\in\Z}\in \ell^2(\Z)$ the equation $$\sum_{n\in J}c_nfe_n+\sum_{n\in J^c} c_nge_n=0$$
holds only if $c_n=0$ for all $n\in\Z.$
\end{definition}

\begin{lemma}\label{Structural_Lemma}
    Let $f$ and $g$ be functions in $L^2[0,1]$ such that 
    $\Ec(f)$ and $\Ec(g)$ are frames for $L^2[0,1]$. Then the following statements hold:
    \begin{enumerate}
        \item [(a)] $\Wc(f, g)$ is wovenly complete if and only if $\Wc(1/f, 1/g)$ is wovenly complete. 
        \medskip
        \item [(b)] $\Wc(f, g)$ is wovenly minimal if and only if $\Wc(1/f, 1/g)$ is wovenly minimal.
    \end{enumerate}
\end{lemma}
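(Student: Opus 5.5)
The plan is to turn the annihilator and biorthogonality conditions for a weaving into conditions on Fourier coefficients. I will then transfer them to the weaving of $1/f,1/g$ with the complementary index set, via multiplication by the bounded function $\overline{f}\,\overline{g}$. By Theorem \ref{OneWeight}(d) there are constants $A,B>0$ with $A\le |f|^2,|g|^2\le B$ a.e. Hence $1/f,1/g\in L^\infty[0,1]$ are bounded below as well, so $\Ec(1/f)$ and $\Ec(1/g)$ are again frames, and $1/(1/f)=f$, $1/(1/g)=g$. It therefore suffices to prove one implication in each of (a) and (b): the reverse implication follows by applying that implication to the pair $1/f,1/g$. Multiplication by $f,g,1/f,1/g$ and their conjugates is a bounded, injective operator on $L^2[0,1]$, so every product below stays in $L^2$, and nonzero functions stay nonzero.

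For (a), suppose $W(f,g,J)$ is not complete for some $J$. Then there is a nonzero $h\in L^2[0,1]$ with $\ip{h}{fe_n}=\widehat{h\overline{f}}(n)=0$ for $n\in J$ and $\widehat{h\overline{g}}(n)=0$ for $n\in J^c$. Set $k=h\overline{f}\,\overline{g}$, which is nonzero and lies in $L^2$. Then
\[
\ip{k}{(1/f)e_n}=\widehat{k/\overline{f}}(n)=\widehat{h\overline{g}}(n),\qquad \ip{k}{(1/g)e_n}=\widehat{k/\overline{g}}(n)=\widehat{h\overline{f}}(n).
\]
The first quantity vanishes for $n\in J^c$ and the second for $n\in J$. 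Hence $k$ annihilates $W(1/f,1/g,J^c)$, so $\Wc(1/f,1/g)$ is not wovenly complete. By the symmetry noted above, this gives (a).

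For (b), I will use the fact that a sequence is minimal if and only if every element admits a vector in $L^2$ biorthogonal to it relative to the rest of the sequence \cite[Lemma 5.4]{Heil11}. Suppose $W(f,g,J)$ is minimal and fix an index $m$; first take $m\in J$. Minimality gives $h_m\in L^2[0,1]$ with $\widehat{h_m\overline{f}}(n)=\delta_{mn}$ for $n\in J$ and $\widehat{h_m\overline{g}}(n)=0$ for $n\in J^c$. Put $k_m=h_m\overline{f}\,\overline{g}$. The same two identities then show $\ip{k_m}{(1/g)e_n}=\delta_{mn}$ for $n\in J$ and $\ip{k_m}{(1/f)e_n}=0$ for $n\in J^c$. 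Thus $k_m$ is biorthogonal to the element $(1/g)e_m$ of $W(1/f,1/g,J^c)$; note that this weaving uses $1/g$ exactly on $J$. The case $m\in J^c$ is identical with the roles of $f$ and $g$ swapped. Consequently $W(1/f,1/g,J^c)$ is minimal, and letting $J$ range over all subsets of $\Z$ gives woven minimality. The converse again follows by symmetry.

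I do not expect a serious obstacle. The one point needing care is the bookkeeping of index sets: the correspondence sends $J$ to $J^c$ and interchanges which generator carries the index $m$. The other point is that all products stay in $L^2$ and nonzero functions stay nonzero. This is exactly where the frame hypothesis, through the two-sided bounds $A\le|f|^2,|g|^2\le B$, is used; without it $k$ could fail to lie in $L^2$.
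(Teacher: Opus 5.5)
Your proof is correct and follows the paper's argument. The paper likewise multiplies by $\overline{f}\,\overline{g}$ (or divides by it) to transfer annihilators and biorthogonal vectors between $W(f,g,J)$ and the weaving of $1/f,1/g$ with the complementary index set, using the frame bounds to keep every product in $L^2[0,1]$. Your explicit handling of the $J\mapsto J^c$ bookkeeping, and of the converse via the symmetry $f\leftrightarrow 1/f$, fills in details the paper leaves implicit.
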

\begin{proof} 
    (a) Assume $\Wc(f, g)$ is wovenly complete. For any $h\in L^2[0,1]$ and any subset $J\subseteq \Z$ we have
     \begin{equation*}
            \Bigip{ h}{\frac1g e_n}  \EQ \Bigip{ \frac{h}{\overline{fg}}}{ f e_n}  \EQ 0, 
        \end{equation*}
   for all $n \in J$. And
   \begin{equation*}
            \Bigip{ h}{\frac1f e_n}  \EQ \Bigip{ \frac{h}{\overline{fg}}}{ g e_n}  \EQ 0, 
        \end{equation*}
    for all $n \in J^c$. Note that $h/(\overline{fg})\in L^2[0,1]$ by Theorem \ref{OneWeight}. Statement (a) is now established.
    
        \medskip
        (b) Suppose that $\Wc(f, g)$ is wovenly minimal. Then for any subset $J \subseteq \Z$ there exists a sequence $\set{h_{n,J}}_{n\in\Z}$ such that  
\begin{equation*}
            \ip{h_{n,J}}{fe_k} \EQ \delta_{nk} \qquad \text{and} \qquad \ip{h_{n,J}}{ge_\ell} \EQ \delta_{n\ell},
        \end{equation*}
 for all $k\in J$ and $\ell\in J^c$ . Using the same technique as in part (a), it follows that
 \begin{equation*}
            \Bigip{\overline{fg}h_{n,J}}{ \frac{1}{f}e_\ell} \EQ \ip{h_{n,J}}{ge_\ell} \EQ \delta_{n\ell},
        \end{equation*}
        and  
        \begin{equation*}
         \Bigip{\overline{fg}h_{n,J}}{ \frac{1}{g}e_k} \EQ \ip{h_{n,J}}{fe_k} \EQ \delta_{nk},
        \end{equation*}
 for all $k\in J$ and $\ell\in J^c$. We then conclude statement (b). 
\end{proof}

We now prove the equivalence between woven $\ell^2$-minimality and wovenly completeness as follows. Recall that a sequence $\set{x_n}\inN$ in $L^2[0,1]$ is minimal if and only if $x_n\notin \clspan\set{x_m}_{m\in\N,\,m\neq n}$ for all $n\in \N.$
\begin{proposition}\label{L2_Independence_Prop}
    Let $f$ and $g$ be functions in $L^2[0,1]$ such that 
    $\Ec(f)$ and $\Ec(g)$ are frames for $L^2[0, 1]$. Then $\Wc(f, g)$ is wovenly $\ell^2$-minimal if and only if $\Wc(f, g)$ is wovenly complete. 
\end{proposition}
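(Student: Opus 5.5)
The plan is to use duality between a weaving $W(f,g,J)$ and the weaving $W(1/\overline{g},1/\overline{f},\cdot)$, and then invoke Lemma \ref{Structural_Lemma}(a). Throughout, since $\Ec(f)$ and $\Ec(g)$ are frames, Theorem \ref{OneWeight}(d) gives constants with $A\le |f|^2,|g|^2\le B$ a.e. Hence $f,g,1/f,1/g\in L^\infty[0,1]$, and for any $(c_n)\in\ell^2(\Z)$ the series $\sum_{n\in J}c_nfe_n$ and $\sum_{n\in J^c}c_nge_n$ converge in $L^2[0,1]$.

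\emph{Key reformulation.} Fix $J\subseteq\Z$ and $(c_n)\in\ell^2(\Z)$, and set
\[
u=\sum_{n\in J}c_ne_n\in\clspan\set{e_n}_{n\in J}, \qquad v=\sum_{n\in J^c}c_ne_n\in\clspan\set{e_n}_{n\in J^c}.
\]
The relation $\sum_{n\in J}c_nfe_n+\sum_{n\in J^c}c_nge_n=0$ is then equivalent to $fu=-gv$. Conversely, every pair $u\in\clspan\set{e_n}_{n\in J}$, $v\in\clspan\set{e_n}_{n\in J^c}$ arises from a unique $\ell^2$ sequence. So wovenly $\ell^2$-minimal fails if and only if there are $J$ and a pair $(u,v)\neq(0,0)$ with these spectral supports and $fu+gv=0$. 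Because $f,g$ are nonzero a.e., $u=0$ forces $v=0$, so both are nonzero.

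\emph{Linking to incompleteness.} If $fu=-gv$ with $u,v$ as above, set $h=\overline{fu}=-\overline{gv}\in L^2$. Write $u=-(g/f)v$ and $v=-(f/g)u$. Then
\[
\ip{h}{(1/f)e_n}=\ip{\overline{fu}}{e_n/f}=\int \overline{u}\,e^{-2\pi i nt}\,dt\cdot(\text{conj. adjustments}).
\]
I will redo this more cleanly: take $p=fu=-gv$, nonzero. Then $p/f=u$ has spectrum in $J$ and $p/g=-v$ has spectrum in $J^c$. By the computation in the first half of the proof of Theorem \ref{Completeness_Characterization}, $\overline{p}$ is orthogonal to $(1/f)e_n$ for $n\in -J^c$ and to $(1/g)e_n$ for $n\in -J$, since $\ip{\overline p}{e_n/f}=\overline{\widehat{p/f}(-n)}$ and the analogous identity holds for $g$. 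Hence the weaving $W(1/f,1/g,(-J)^c)$ is incomplete.

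Conversely, if some weaving $W(1/f,1/g,I)$ is incomplete with witness $h\neq0$, put $p=\overline h$. Then $u=p/f$ has Fourier coefficients vanishing on $-I$, and $-v=p/g$ has coefficients vanishing on $-I^c$. Both lie in $L^2$ because $1/f,1/g\in L^\infty$, so they belong to the $L^2$-closed spans of exponentials on $(-I)^c$ and $-I$ respectively. This yields a nontrivial $\ell^2$ relation with $J=(-I)^c$.

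Thus wovenly $\ell^2$-minimal is equivalent to $\Wc(1/f,1/g)$ being wovenly complete, and Lemma \ref{Structural_Lemma}(a) turns this into wovenly complete for $\Wc(f,g)$. The main obstacle is only the bookkeeping of the index reflections $J\mapsto -J$ and complements, and checking that all products stay in $L^2$. The latter is guaranteed by the two-sided bounds from Theorem \ref{OneWeight}(d), so no $L^1$ version such as Lemma \ref{Lemma_Completeness} is needed.
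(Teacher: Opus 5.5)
Your proof is correct and follows essentially the same route as the paper: you turn a nontrivial $\ell^2$ relation into a nonzero $p=fu=-gv$ with $p/f$ and $p/g$ spectrally supported on $J$ and $J^c$, deduce that $\Wc(1/f,1/g)$ is not wovenly complete, and transfer back via Lemma \ref{Structural_Lemma}(a). The only differences are that you write out the Fourier-coefficient computation instead of citing Theorem \ref{Completeness_Characterization}, and you carry out the converse explicitly (the paper only says it is proved ``similarly''), using $1/f,1/g\in L^\infty$ so that you can work with $L^2$-closed spans rather than $L^1$ ones.
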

\begin{proof}
    Suppose $\Wc(f, g)$ is not wovenly $\ell^2$-minimal. Then there exists $J \subseteq \Z$ and a nonzero sequence $c = (c_n) \in \ell^2(\Z)$ such that 
    \begin{equation*}
        \sum_{n \in J} c_n fe_n \plus \sum_{n \in J^c} c_n g e_n \EQ 0.
    \end{equation*}

    Let $h_1 = \sum_{n \in I} c_n e_n$, $h_2 = \sum_{n \in I^c} (-c_n)e_n$, and set $p = fh_1 = gh_2$. Then, 
    \begin{equation*}
        \frac{p}{f} \in \clspan\set{e_n}_{n \in I} \qquad \text{and} \qquad \frac{p}{g} \in \clspan\set{e_n}_{n \in I^c}.
    \end{equation*}
    By Theorem \ref{Completeness_Characterization} we conclude that $\Wc(1/f, 1/g)$ is not wovenly complete. By Lemma \ref{Structural_Lemma}, it follows that $\Wc(f, g)$ is not wovenly complete as well. The converse can be proved similarly. 
\end{proof}

\begin{example}
    Fix any $n,m\in\Z$. We claim that $\Wc(ae_n+be_m,ce_n+de_m)$ is wovenly $\ell^2$-minimal for any nonzero scalars $a,b,c,d$ with $|a|>|b|$ and $|c|>|d|$. Note that $\Wc(ae_n+be_m,ce_n+de_m)$ is wovenly minimal if and only if $\Wc(\frac{1}{a}(ae_n+be_m),\frac{1}{c}(ce_n+de_m))$ for any nonzero scalars $C$ and $D$. Furthermore, using the shifting technique as in Example \ref{application_woven_comple}, we may assume that $n=0$. Therefore,  $\Wc(ae_n+be_m,ce_n+de_m)$is wovenly minimal if and only if $\Wc(1+\alpha e_{m-n},1+\beta e_{m-n})$ wovenly minimal, where $|\alpha|=|\frac{b}{a}|<1$ and $|\beta|=|\frac{d}{c}|<1$. By Theorem \ref{OneWeight}, $\Ec(1+\alpha e_{m-n})$ and $\Ec(1+\beta e_{m-n})$ are frames for $L^2[0,1].$ We have shown that $\Wc(1+\alpha e_{m-n},1+\beta e_{m-n})$ is wovenly complete in Example \ref{application_woven_comple}. By Proposition \ref{L2_Independence_Prop}, it follows that $\Wc(1+\alpha e_{m-n},1+\beta e_{m-n})$ is wovenly $\ell^2$-minimal.
\end{example}

Since the generic minimality is a stronger property than $\ell^2$-minimality in the setting of frames of weighted exponentials, we obtain the following corollary. We leave open the question of whether woven minimality always implies woven completeness for systems of weighted exponentials. In fact, it remains unknown whether woven completeness implies woven minimality in the setting of frames of weighted exponentials.
\begin{corollary}\label{RB_Minimality_Implies_Complete}
     Let $f$ and $g$ be functions in $L^2[0,1]$ such that 
    $\Ec(f)$ and $\Ec(g)$ are frames for $L^2[0,1]$. If $\Wc(f, g)$ is wovenly minimal, then $\Wc(f, g)$ is wovenly complete (and hence exact). \qeddef
\end{corollary}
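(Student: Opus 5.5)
The plan is to reduce to Proposition \ref{L2_Independence_Prop}: show that woven minimality implies woven $\ell^2$-minimality when $\Ec(f)$ and $\Ec(g)$ are frames, and then read off woven completeness. Exactness then follows immediately, since by Definition \ref{def_woven_system} a wovenly complete and wovenly minimal system is wovenly exact.

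First we check that every weaving is Bessel. By Theorem \ref{OneWeight}(d), both $f$ and $g$ lie in $L^\infty[0,1]$, so Proposition \ref{charac_woven_Bessel}(a) shows that $\Wc(f,g)$ is a woven Bessel sequence. Hence, for each $J\subseteq\Z$ and each $(c_n)\in\ell^2(\Z)$, the series
\[
\sum_{n\in J}c_nfe_n+\sum_{n\in J^c}c_nge_n
\]
converges unconditionally in $L^2[0,1]$. This justifies taking inner products term by term.

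Next, fix $J$ and suppose the series above vanishes for some $(c_n)\in\ell^2(\Z)$. Woven minimality gives a biorthogonal sequence $\set{h_{m,J}}_{m\in\Z}$ for the weaving $W(f,g,J)$. Pairing the vanishing sum with $h_{m,J}$ and using continuity of the inner product on the convergent series yields $c_m=0$ for every $m$. Thus $\Wc(f,g)$ is wovenly $\ell^2$-minimal, and Proposition \ref{L2_Independence_Prop} gives woven completeness.

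The only delicate point is the convergence of the series, which is what allows term-by-term pairing with $h_{m,J}$. The frame (hence $L^\infty$) hypothesis is exactly what secures this. Everything else is formal, so I expect no substantial obstacle beyond invoking Proposition \ref{L2_Independence_Prop} correctly.
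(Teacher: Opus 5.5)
Your proposal is correct and follows the paper's route. The paper deduces the corollary in the same way: for frames of weighted exponentials, minimality implies $\ell^2$-minimality, and Proposition \ref{L2_Independence_Prop} then gives woven completeness. Your argument that each weaving is Bessel (since $f,g\in L^\infty[0,1]$), so the series converges and can be paired term by term with the biorthogonal sequence, just makes explicit the step the paper leaves unstated.
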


As shown in the following example, the converse of Corollary \ref{RB_Minimality_Implies_Complete} does not generally hold without assuming that the two systems of weighted exponentials involved are frames.

\begin{example}
    Let
        $f(t) = e^{2 \pi it} + e^{-2 \pi i t} = 2\cos (2\pi t)$ and   $g(t) = e^{2 \pi it} - e^{-2 \pi i t} = 2i\sin (2\pi t)$.
  By Theorem \ref{OneWeight}, both $\Ec(f)$ and $\Ec(g)$ are not frames for $L^2[0,1]$. In particular, $\Wc(f, g)$ is not wovenly minimal. Nevertheless, $\Wc(f, g)$ is wovenly complete by Example \ref{application_woven_comple}. \qeddef
\end{example}

Lemma \ref{Structural_Lemma} shows that, for frames of weighted exponentials, woven completeness and woven minimality are invariant under replacing the weights by their reciprocals. In analogy with one-weighted setting, one might hope that woven minimality could be characterized by the simple reciprocal integrability conditions, for example, that the individual minimality of $\Ec(f)$ and $\Ec(g)$ would force woven minimality of $\Wc(f, g)$. This proves to be false. In fact, even if both $\Ec(f)$ and $\Ec(g)$ are exact, there is no guarantee that $\Wc(f, g)$ would be minimal. We provide an explicit counterexample below.

\begin{example}
    We first define
    \begin{equation*}
        f(t) \EQ 
        \begin{cases}
            t^{-1/4}, & t \in [0, 1/4), \\
            1, & t \in [1/4, 1/2) \cup [3/4, 1), \\
            (t-1/2)^{-1/4}, & t \in [1/2, 3/4),
        \end{cases}
    \end{equation*}
    and
    \begin{equation*}
        g(t) \EQ 
        \begin{cases}
            t^{1/4} e^{-2 \pi i t}, & t \in [0, 1/4), \\
            e^{-2 \pi i t}, & t \in [1/4, 1/2) \cup [3/4, 1), \\
            (t-1/2)^{1/4} e^{-2 \pi i (t-1/2)}, & t \in [1/2, 3/4).
        \end{cases}
    \end{equation*}
    A direct computation shows that $\Ec(f)$ and $\Ec(ge_1)$ are exact systems in $L^2[0, 1]$. We will show that the weaving $\Wc(f, ge_1, 2\Z)$ is not minimal. Equivalently, $\Ec(f,2\Z)\cup\Ec(g,2\Z)$ is not minimal.
    
    Next, we define the $\frac12$-periodic functions
    \begin{equation*}
        \tau(t) \EQ 
        \begin{cases}
            t^{1/2}, & t \in [0, 1/4), \\
            0, & t \in [1/4, 1/2),
        \end{cases}
        \qquad \text{and} \qquad 
         \gamma(t) \EQ 
        \begin{cases}
            e^{2 \pi i t}, & t \in [0, 1/4), \\
            0, & t \in [1/4, 1/2).
        \end{cases}
    \end{equation*}
   Note that $\tau f = \gamma g$ in $L^2[0, 1]$. Since $\set{\sqrt{2}e_{2n}}_{n \in \Z}$ is an orthonormal basis for $L^2[0, 1/2]$ there exists a sequence $(c_n)_{n \in \Z} \in \ell^2(\Z)$ such that 
    \begin{equation*}
        \gamma \EQ \sum_{n \in \Z} c_n e_{2n},
    \end{equation*}
    with convergence of the series in the norm of $L^2[0, 1]$. Because $g \in L^\infty[0, 1]$, we obtain that
    \begin{equation*}
        \gamma g \EQ \sum_{n \in \Z} c_n e_{2n} g
    \end{equation*}
    with convergence of the series in the norm of $L^2[0, 1]$. Let $n_0 \in \Z$ be such that $c_{n_0} \neq 0$. Then 
    \begin{equation*}
        \tau f - \bigparen{\sum_{n \neq n_0} c_n e_{2n}} g \EQ c_{n_0} e_{2n_0} g.
    \end{equation*}
    By \cite[Corollary 2.5]{PY25}, $\tau f\in \clspan{\set{\Ec(f, 2\Z)}}.$ Thus, $e_{2n_0}g \in \overline{\text{span}}\bigset{\Ec(f, 2\Z) \cup \Ec(g, 2\Z \setminus \set{2n_0})}$. This implies that $\Wc(f, ge_1, 2\Z)$ is not minimal.\qeddef
\end{example}

\subsection{Woven frames} We conclude this section by presenting results regarding woven frames. 

In Proposition \ref{Completeness_SSign}, we showed that two functions generate a woven complete system if their product is either strictly positive or strictly negative almost everywhere on $[0,1]$. We further extend this results to the case of frames of weighted exponentials.

\begin{theorem}
\label{woven_Riesz_suff_condi}
     Let $f,g\in L^2[0,1]$ be such that $\Ec(f)$ and $\Ec(g)$ are frames. If $\frac{g}{f}$ is either strictly positive or strictly negative almost everywhere on $[0,1]$, then $\Wc(f,g)$ is a woven frame for $L^2([0,1]).$
\end{theorem}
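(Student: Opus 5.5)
Since each weaving $\Wc(f,g,J)$ is a subset of $\Ec(f)\cup\Ec(g)$, and both of these are Bessel by Theorem \ref{OneWeight}(c), every weaving is Bessel with bound at most $\|f\|_\infty^2+\|g\|_\infty^2$. The work is therefore in the lower frame bound, which we want uniform in $J$. Assume $g/f>0$ a.e.; the negative case reduces to this one by replacing $g$ with $-g$, which does not change the frame property of any weaving. By Theorem \ref{OneWeight}(d) there are constants $0<A\le B$ with $A\le|f|^2,|g|^2\le B$ a.e. Hence $\phi:=g/f$ is real-valued with $A/B\le\phi\le B/A$ a.e.

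Fix $J$ and $h\in L^2[0,1]$. Put $u=h\overline{f}$ and $v=h\overline{g}=\phi\,u$. Then $\ip{h}{fe_n}=\widehat{u}(n)$ and $\ip{h}{ge_n}=\widehat{v}(n)$. Let $P$ denote the orthogonal projection onto $\clspan\set{e_n}_{n\in J}$. By Parseval,
\[
\sum_{n\in J}|\ip{h}{fe_n}|^2+\sum_{n\in J^c}|\ip{h}{ge_n}|^2 \EQ \norm{Pu}^2+\norm{(I-P)\phi u}^2 .
\]
Because $\norm{u}^2\ge A\norm{h}^2$, it is enough to prove $\norm{Pu}^2+\norm{(I-P)\phi u}^2\ge c\norm{u}^2$ with $c>0$ depending only on $m:=A/B$ and $M:=B/A$, and not on $J$ or $u$.

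For this key inequality, write $M_\phi$ for multiplication by $\phi$. This operator is self-adjoint and satisfies $M_\phi\ge m I$. Set $a=\norm{Pu}$ and $b=\norm{(I-P)\phi u}$. Positivity gives
\[
m\norm{u}^2\Le\ip{\phi u}{u}\EQ \ip{\phi u}{Pu}+\ip{(I-P)\phi u}{(I-P)u}.
\]
The first term is bounded by $M\norm{u}\,a$. For the second term we need a bound on $\norm{(I-P)u}$ that does not reintroduce $\norm{u}$ with a large constant; the crude estimate $\norm{(I-P)u}\le\norm{u}$ would give only $m\norm{u}^2\le M\norm{u}a+b\norm{u}$. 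That is still sufficient: dividing by $\norm{u}$ yields $m\norm{u}\le Ma+b$, so $m^2\norm{u}^2\le 2(M^2+1)(a^2+b^2)$, and $c=m^2/(2(M^2+1))$ works. Using $\ip{\phi u}{u}$, rather than trying to compare $u$ and $\phi u$ directly, is exactly where the sign condition enters.

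The only delicate point I expect is checking that this inner-product splitting is legitimate, namely that $\ip{\phi u}{u}=\ip{P\phi u}{Pu}+\ip{(I-P)\phi u}{(I-P)u}$ and that $|\ip{P\phi u}{Pu}|\le\norm{\phi u}\,a\le M\norm{u}\,a$. Both are routine, since $P$ is an orthogonal projection and $\phi$ is bounded. Combining, every weaving has lower frame bound $A\,m^2/(2(M^2+1))$ and upper bound $2B$, uniformly in $J$, so $\Wc(f,g)$ is a woven frame.
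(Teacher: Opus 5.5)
Your proof is correct, but it takes a different route from the paper's. The paper works with the synthesis operator. It first multiplies $f$ by a suitable positive constant so that $\|g/f-1\|_\infty<1$; this is where the sign condition enters, since it places the essential range of $g/f$ in a compact subinterval of $(0,\infty)$. It then writes the synthesis operator of $\Wc(f,g,J)$ as $M_fS\,(I+S^{-1}M_{(g-f)/f}SP_{J^c})$, with $S\colon c\mapsto\sum_n c_ne_n$ unitary, and uses a Neumann series to get invertibility for every $J$.

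You work on the analysis side instead. You derive the lower frame inequality directly from the coercivity $\ip{\phi u}{u}\ge m\|u\|^2$ and the orthogonal splitting by $P$ and $I-P$. In effect you have made quantitative the orthogonality argument the paper uses for woven completeness in Theorem \ref{ONB_Characterization} and Proposition \ref{Completeness_SSign}, where $\int_0^1|h|^2f\overline{g}=0$ contradicts the sign condition.

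Your argument is more elementary, with no rescaling and no operator inversion, and it gives explicit bounds uniform in $J$. The paper's argument yields more: every weaving is a Riesz basis, which your inequality alone does not show. If you want that too, the same positivity trick gives injectivity of the synthesis operator. Suppose $\sum_{n\in J}c_nfe_n+\sum_{n\in J^c}c_nge_n=0$, and set $U=\sum_{n\in J}c_ne_n$ and $V=\sum_{n\in J^c}c_ne_n$, which are orthogonal. Then $U=-\phi V$, so $m\|V\|^2\le\ip{\phi V}{V}=-\ip{U}{V}=0$. Since a frame with injective synthesis operator is a Riesz basis, this finishes it.
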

\begin{proof}
We first observe that for any subset $J\subseteq \Z$ and any nonzero scalars $C,D$ we have 
$$\sum_{n\in J}\bigabs{\ip{h}{Cfe_n}}^2+\sum_{n\in J^c}\bigabs{\ip{h}{Dge_n}}^2\Le \max(C,D) \Bigparen{\sum_{n\in J}\bigabs{\ip{h}{fe_n}}^2+\sum_{n\in J^c}\bigabs{\ip{h}{ge_n}}^2},$$
and
$$\sum_{n\in J}\bigabs{\ip{h}{Cfe_n}}^2+\sum_{n\in J^c}\bigabs{\ip{h}{Dge_n}}^2\Ge \min(C,D) \Bigparen{\sum_{n\in J}\bigabs{\ip{h}{fe_n}}^2+\sum_{n\in J^c}\bigabs{\ip{h}{ge_n}}^2},$$
for all $h\in L^2[0,1]$. Consequently, $\Wc(f,g)$ is a woven frame if and only if $\Wc(Cf,Dg)$ is a woven frame for $L^2[0,1]$ for some nonzero scalars $C,D.$ Therefore, without loss of generality, we may assume $\frac gf>0$ a.e. on $[0,1]$.
Furthermore, since $\Ec(f)$ and $\Ec(g)$ are frames, there exists some $\delta_1,\delta_2>0$ such $\delta_2\geq \frac{g}{f}\geq\delta_1$ almost everywhere on $[0,1].$ By choosing $C$ large enough that $\norm{\frac{g}{Cf}}_{\infty}<1$, we may assume that $\norm{\frac{g}{f}-1}_{\infty}<1$. 

    Now for a fixed subset $J \subseteq \Z$ we define the orthogonal projection $P_J: \ell^2(\Z) \to \ell^2(\Z)$ associated with $J$ by 
    
    $$ (P_Jc)_n = 
    \begin{cases}
        c_n, & n \in J \\
        0, & n \in J^c
    \end{cases}.$$ 
    and define the synthesis operator $T^*_J: \ell^2(\Z) \to L^2[0, 1] $ associated with $\Wc(f,g,J)$ by
    $$ T^*_Jc = \sum_{n \in J} c_n fe_n + \sum_{n \in J^c} c_nge_n,$$
    where $c=(c_n)_{n\in\Z}.$
    We also define the ``inverse Fourier transform" $S\colon \ell^2(\Z)\rightarrow L^2[0,1]$ by $$S(c)=\sum_{n\in\Z}c_ne_n$$
    For notational convenience, for any $h\in L^\infty([0,1])$ we denote by $M_f$ the multiplication operator on $L^2[0,1]$ defined by $M_fh=fh.$ 
    With these notations, we obtain the following expression 
    \begin{align*}
        T^*_J &= M_fS^{-1} P_J + M_g S^{-1} P_{J^c}\\ &\Eq M_f S^{-1} + M_{g-f} S^{-1}P_{J^c} \\
        &= (M_f S^{-1}) (I + (M_f S^{-1})^{-1} M_{g-f} S^{-1} P_{J^c}),
    \end{align*}
    where $I$ is the identity operator on $L^2[0,1].$
Since $T_J^*$ maps the standard orthonormal basis for $\ell^2(\Z)$ to $\Wc(f,g,J)$, it remains to show that $T_J^*$ is an invertible bounded linear operator. Note that both $M_f$ and $S$ are invertible, so it is equivalent to show that $$\norm{I+(M_f \Fc^{-1})^{-1} M_{g-f} \Fc^{-1} P_{J^c}-I}\Eq \norm{(M_f \Fc^{-1})^{-1} M_{g-f} \Fc^{-1} P_{J^c}}<1.$$ We then compute 
    \begin{align}
    \label{Key_estimate}
        \norm{I+(M_f \Fc^{-1})^{-1} M_{g-f} \Fc^{-1} P_{J^c}-I} &= \norm{(M_f \Fc^{-1})^{-1} M_{g-f} \Fc^{-1} P_{J^c}} \\
        &= \norm{\Fc M_{(g-f)/f} \Fc^{-1} P_{J^c}}  \\
        & \leq \norm{\Fc} \norm{M_{(g-f)/f}} \norm{\Fc^{-1}} \norm{P_{J^c}} \\
        & \leq \norm{M_{(g-f)/f}} \\
        & = \left\|\frac{g}{f}-1\right\|_\infty  \\
        & < 1.
    \end{align}
 It follows that $I + (M_f \Fc^{-1})^{-1} M_{g-f} \Fc^{-1} P_{J^c}$ is an invertible bounded linear operator. Thus, $\Wc(f,g,J)$ is a frame (in fact, a Riesz basis) for $L^2([0,1])$ for any $J\subset \Z.$
\end{proof}

We highlight some notable implications of Theorem \ref{sufficient_woven_frames_special_case} in the following remark. 
\begin{remark} (a) We then obtain Theorem \ref{sufficient_woven_frames_special_case} as an immediate corollary of Theorem \ref{woven_Riesz_suff_condi}.
\smallskip

(b) As pointed out in Example \ref{non_real_form_woven_Riesz}, Theorem \ref{woven_Riesz_suff_condi} offers a simple sufficient condition, but it is not necessary.

\smallskip
(c)  If $f\in L^2[0,1]$ is such that $\Ec(f)$ is a frame for $L^2[0,1]$, then $\Wc(f,1/\overline{f})$ is a woven Riesz basis. Recall that the famous Wiener's Lemma (for example, see \cite[page 217]{Kat04}) states that if $f$ is a continuous function such that $f(t)\neq 0$ for every $t\in [0,1]$. Then $\frac 1f$ has absolutely summable Fourier coefficients if $f$ does. However, the distribution of the spectrum $\frac 1f$ over $\Z$ is usually unknown. For example, is the spectra of $\frac 1f$ related to $f$ in some sense? Let $\sigma(f)$ and $\sigma(1/f)$ be the spectra of $f$ and $1/f$, respectively. Note that $\sigma(1/\overline{f})=-\sigma(1/f)$. By Theorem \ref{Completeness_Characterization} and Theorem \ref{woven_Riesz_suff_condi}, we see that the $\sigma(f)$ and $-\sigma(1/f)$ must be highly overlapping in the sense of Theorem \ref{Completeness_Characterization}. For instance, there does not exist any $f$ such that $\sigma(f)$ and $-\sigma(1/f)$ are disjoint, provided that $f$ has absolutely summable Fourier coefficients and never vanishes on $[0,1]$.

\medskip
(d) The key step in the proof of Theorem \ref{woven_Riesz_suff_condi} is the satisfaction of inequality $$\norm{M_{(g-f)/f}}=\norm{(g-f)/f}_{L^\infty[0,1]}<1.$$ As a result, we can also derive the following perturbation result using the same proof: ``For any $f\in L^2[0,1]$ such that $\Ec(f)$ is a frame for $L^2[0,1]$ the system $\Wc(f,f+h)$ is a woven frame for any $h\in L^2[0,1]$ with $\norm{h}_{L^\infty[0,1]}<\norm{f}_{L^\infty[0,1]}$.\qeddef 
\end{remark} 

We conclude this section by establishing the stability of woven Riesz bases under Paley–Wiener type perturbations. For other perturbation results regarding frames in Hilbert spaces, as well as in Banach spaces, we refer to \cite{CC97}, \cite{CC98} and \cite{CH97}.

\begin{theorem}
 Let $f \in L^2[0,1]$ be such that $\Ec(f)$ is a frame with frame bounds $A$ and $B$. Let $\lambda, \mu >0$ be such that $\mu + \lambda\sqrt{B}< \frac{A}{2\sqrt{B}}.$ Assume that $g\in L^2[0,1]$ satisfies 
\begin{align}
\label{pertubation_criterion}
\Bignorm{\sum_{n \in \mathbb{Z}} c_n (f  - g)e_n}_{L^2[0,1]} \Le \lambda\,\Bignorm{\sum_{n \in \mathbb{Z}} c_n fe_n}_{L^2[0,1]} + \mu\Bigparen{\sum_{n \in \mathbb{Z}}|c_n|^2}^{\frac{1}{2}},
\end{align}
for all $(c_n)_{n\in\Z}\in \ell^2(\Z)$.
Then $\Wc(f,g)$ is a woven frame.    
\end{theorem}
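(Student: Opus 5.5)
Fix $J\subseteq\Z$ and work with the synthesis operator $T_J^*\colon\ell^2(\Z)\to L^2[0,1]$, $T_J^*c=\sum_{n\in J}c_nfe_n+\sum_{n\in J^c}c_nge_n$, together with $U^*c=\sum_{n\in\Z}c_nfe_n$, the synthesis operator of $\Ec(f)$. By Theorem \ref{OneWeight}(d), $\Ec(f)$ is a Riesz basis with $A\le|f|^2\le B$ a.e. Hence $\sqrt{A}\,\norm{c}\le\norm{U^*c}\le\sqrt{B}\,\norm{c}$, and in particular $\norm{U^*}\le\sqrt B$. The plan is a Paley--Wiener/Casazza--Christensen perturbation argument comparing $T_J^*$ with $U^*$, with bounds uniform in $J$.

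\textbf{Key identity.} Since $T_J^*c=U^*c-\sum_{n\in J^c}c_n(f-g)e_n$, we have $(U^*-T_J^*)c=\sum_{n\in\Z}(P_{J^c}c)_n(f-g)e_n$, where $P_{J^c}$ is the coordinate projection used in the proof of Theorem \ref{woven_Riesz_suff_condi}. Applying \eqref{pertubation_criterion} to the sequence $P_{J^c}c$ gives
\begin{equation*}
\norm{(U^*-T_J^*)c}\Le \lambda\,\norm{U^*P_{J^c}c}+\mu\norm{P_{J^c}c}\Le(\lambda\sqrt B+\mu)\norm{c}.
\end{equation*}
Here the projection is what forces us to use the operator bound $\norm{U^*P_{J^c}c}\le\sqrt B\norm{c}$. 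We cannot compare with $\norm{U^*c}$ directly, since $U^*P_{J^c}$ is not controlled by $U^*$ alone; this is why the hypothesis involves $\lambda\sqrt B$. The same estimate shows that $T_J^*$ is bounded with $\norm{T_J^*}\le \sqrt B+\lambda\sqrt B+\mu$, so every weaving is Bessel with a bound independent of $J$.

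\textbf{Lower frame bound.} It suffices to show that $T_J^*$ is invertible, or at least surjective, with a lower bound on $T_J$ uniform in $J$. Write $T_J^*=U^*\bigl(I-(U^*)^{-1}(U^*-T_J^*)\bigr)$. Then $\norm{(U^*)^{-1}(U^*-T_J^*)}\le(\lambda\sqrt B+\mu)/\sqrt A$, and this is $<1$ whenever $\mu+\lambda\sqrt B<\sqrt A$. The hypothesis $\mu+\lambda\sqrt B<A/(2\sqrt B)$ implies this, since $A\le B$ gives $A/(2\sqrt B)\le\sqrt A/2$. By a Neumann series, $T_J^*$ is then an isomorphism with $\norm{(T_J^*)^{-1}}\le \bigl(\sqrt A-\lambda\sqrt B-\mu\bigr)^{-1}$, so $\Wc(f,g,J)$ is a Riesz basis with lower bound $(\sqrt A-\lambda\sqrt B-\mu)^2>0$, independent of $J$. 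If I wanted to reproduce the stated constant, I would instead bound the analysis operators via $\norm{T_Jh}\ge\norm{Uh}-\norm{(U-T_J)h}\ge\sqrt A\norm h-(\lambda\sqrt B+\mu)\norm h$ and use $A/(2\sqrt B)$ as a crude sufficient threshold. Either route works.

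\textbf{Main obstacle.} The main obstacle is the first step. Hypothesis \eqref{pertubation_criterion} is stated only for the full system, so I must justify applying it to $P_{J^c}c$ and absorbing the $\lambda$-term by $\sqrt B\norm{c}$ rather than by $\norm{U^*c}$. Everything after that is the standard Neumann-series argument. Since all constants are independent of $J$, every weaving is a frame, so $\Wc(f,g)$ is a woven frame.
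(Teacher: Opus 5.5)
Your proof is correct, but it follows a different route from the paper's.

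The paper stays on the analysis side. It writes $\ip{h}{ge_n}=\ip{h}{fe_n}+\ip{h}{(g-f)e_n}$ and expands $|\ip{h}{ge_n}|^2$ for $n\in J^c$. It then discards the nonnegative term $\sum_{n\in J^c}|\ip{h}{(g-f)e_n}|^2$ and bounds the cross term by Cauchy--Schwarz, using $\sqrt B\,\norm{h}$ and $\norm{R}\le\lambda\sqrt B+\mu$, where $R(c)=\sum_n c_n(f-g)e_n$. This is exactly where the threshold $A>2\sqrt B(\lambda\sqrt B+\mu)$ and the lower bound $A-2\sqrt B(\lambda\sqrt B+\mu)$ come from.

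You instead carry over the Neumann-series argument from Theorem \ref{woven_Riesz_suff_condi}: $T_J^*=U^*(I-K)$ with $\norm{K}\le(\lambda\sqrt B+\mu)/\sqrt A$. This uses that $\Ec(f)$ is a Riesz basis, which holds automatically here by Theorem \ref{OneWeight}(d). It proves more than the statement:
\begin{itemize}
\item it needs only $\lambda\sqrt B+\mu<\sqrt A$;
\item it gives the lower bound $(\sqrt A-\lambda\sqrt B-\mu)^2$, which is at least $A-2\sqrt B(\lambda\sqrt B+\mu)$ since $\sqrt A\le\sqrt B$;
\item it shows every weaving is a Riesz basis, not just a frame.
\end{itemize}
Both arguments give the same upper bound, $(\sqrt B+\lambda\sqrt B+\mu)^2$. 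Your reverse-triangle variant, $\norm{T_Jh}\ge\norm{Uh}-\norm{(U-T_J)h}$, gets the same improved lower bound using only the frame inequalities of $\Ec(f)$. So, like the paper's argument, it works for an arbitrary frame in place of $\Ec(f)$, while the Neumann-series route needs invertibility of $U^*$.

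Two small corrections. First, the ``main obstacle'' is not an obstacle: the hypothesis is assumed for every sequence in $\ell^2(\Z)$, in particular for $P_{J^c}c$. Second, the reverse-triangle route does not reproduce the paper's constant; it again gives $(\sqrt A-\lambda\sqrt B-\mu)^2$. The paper's weaker constant comes from expanding the square and dropping a positive term.
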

\begin{proof}
Note that Equation (\ref{pertubation_criterion}) implies that the linear operator $R:\ell^2(\Z)\rightarrow L^2[0,1]$ defined 
$$R\bigparen{(c_n)_{n\in\Z}}\Eq \sum_{n \in \mathbb{Z}} c_n (f  - g)e_n$$
is bounded and $\norm{R}\leq (\lambda \sqrt{B}+\mu).$ Now fix $J \subset \mathbb{Z}$. For any  $h \in L^2[0,1]$ we define 
\begin{align}
\label{perturbation_equation_I}
           S(h) \EQ \sum_{n \in J}|\ip{h}{fe_n}|^2 + \sum_{n \in J^c}|\ip{h}{ge_n}|^2.   
\end{align}
For notational convenience, we denote by $S_1(h)$ and $S_2(h)$ the first term and the second term on the right-hand side of Equation (\ref{perturbation_equation_I}), respectively.

We estimate $S_2(h)$ first. A straightforward computation shows that
$$S_2(h)\EQ \sum_{n \in J^c}|\ip{h}{fe_n}|^2 + \sum_{n \in J^c}|\ip{h}{(g-f) e_n}|^2 + \sum_{n \in J^c}2 \text{Re}\bigparen{\ip{h}{f e_n} \overline{\ip{h}{(g-f) e_n}}},$$
where $\text{Re}\bigparen{\ip{h}{f e_n} \overline{\ip{h}{(g-f) e_n}}}$ denotes the real part of $\ip{h}{f e_n} \overline{\ip{h}{(g-f) e_n}}$. By Cauchy–Bunyakovsky–Schwarz inequality, we obtain
\begin{align}
\begin{split}
\text{Re}\bigparen{\ip{h}{f e_n} \overline{\ip{h}{(g-f) e_n}}}&\Le \Bigparen{\sum_{n \in J^c}|\ip{h}{(g-f) e_n}|^2}^{\frac{1}{2}}\Bigparen{\sum_{n \in J^c}|\ip{h}{fe_n}|^2}^{\frac{1}{2}}\\
&\Le (\lambda \sqrt{B}+\mu)\sqrt{B}\norm{h}_{L^2[0,1]}^2,
\end{split}
\end{align}
where we have used the fact $\Bigparen{\sum_{n \in J^c}|\ip{h}{(g-f) e_n}|^2}^{\frac{1}{2}}\Le (\lambda \sqrt{B}+\mu) \norm{h}_{L^2[0,1]}.$ Consequently, $\Wc(f,g,J)$ is a Bessel sequence with an upper frame bound $B+2(\lambda\sqrt{B}+\mu)\sqrt{B}+(\lambda\sqrt{B}+\mu)^2.$
On the other hand, since
$$S_2(h)\Ge \sum_{n \in J^c}|\ip{h}{fe_n}|^2 + \sum_{n \in J^c}|\ip{h}{(g-f) e_n}|^2-2(\lambda \sqrt{B}+\mu)\sqrt{B} \norm{h}_{L^2[0,1]},$$
it follows that 
\begin{align}
\begin{split}
S(h)&\Ge \sum_{n\in\Z}|\ip{h}{fe_n}|^2+\sum_{n \in J^c}|\ip{h}{(g-f) e_n}|^2-2(\lambda \sqrt{B}+\mu)\sqrt{B} \norm{h}_{L^2[0,1]}\\
&\Ge  \sum_{n\in\Z}|\ip{h}{fe_n}|^2 -2(\lambda \sqrt{B}+\mu)\sqrt{B} \norm{h}_{L^2[0,1]}.
\end{split}
\end{align}
Thus, if $A>2\sqrt{B}(\lambda\sqrt{B}+\mu)$, then $\Wc(f,g,J)$ would be a frame for $L^2[0,1]$ with frame bounds $A-2\sqrt{B}(\lambda\sqrt{B}+\mu)$ and $B+2(\lambda\sqrt{B}+\mu)\sqrt{B}+(\lambda\sqrt{B}+\mu)^2$ 
\end{proof}
\begin{remark} \label{generalization1}
Fix $a>0.$ By slightly modifying the proof, we can obtain an analogous of Theorem \ref{OneWeight} for the systems of weighted exponentials $\set{ge^{2\pi iant}}_{n\in\Z}$ in $L^2[0,1/a].$  As a result, all results presented above apply equivalently to systems of weighted exponentials in $L^2[0,1/a]$.
\end{remark}

\section{applications}
\label{application}
Finally, we present several applications of our main results in the setting of systems of regular translates and Gabor systems with critical density. All proofs presented below extend straightforwardly to higher-dimensional cases.
\subsection{Systems of regular translates}
The fiberization map (\ref{fiberization_map}) transforms a system of regular translates in $L^2(\R)$ into a system of weighted exponentials in $L^2[0,1]$ with a real-valued, non-negative generator. Consequently, for any $J\subseteq \Z$ the union $\clspan\set{T_kf}_{k\in J}\cup\clspan\set{T_kg}_{k\in J^c}$ is complete (resp. minimal, a frame) in its closed span in $L^2(\R)$ if and only if 
$\clspan\set{\Phi_f e_k}_{k\in J}\cup\clspan\set{\Phi_g e_k}_{k\in J^c}$ is complete (resp. minimal, a frame) in its closed span in $L^2[0,1].$ We accordingly make use our main results to obtain the following corollaries.

Let $g\in L^2(\R)$ and assume that $f\in \clspan\set{T_kg}_{k\in\Z}.$ 
It is known that $\clspan\set{T_kf}_{k\in\Z}$ is $\clspan\set{T_kg}_{k\in\Z}$ if and only if $\text{supp}(\widehat{f})=\text{supp}(\widehat{g})$ (for example, see \cite[Corollary 2.4]{BDR94}). However, it is not known whether the closed span of the ``partial translates" of two distinct generators would compensate each other and still cover $\clspan\set{T_kf}_{k\in\Z}$.
We provide a affirmative answer to this question below, provided that the fiberization of $g$ is nonzero almost everywhere.

\begin{corollary}
\label{woven_completeness_system_translates}
Let $g\in L^2(\R)$ be such that $\Phi_g\neq 0$ almost everywhere on $[0,1]$. Assume that $f,h$ are functions in $ \clspan\set{T_{k}g}_{k\in \Z}$ with $\text{supp}(\widehat{f})=\text{supp}(\widehat{h})=\text{supp}(\widehat{g})$. Then for any $J\subseteq\Z$ the set $\clspan\set{T_{k}f}_{k\in J} \cup \clspan\set{T_{k}g}_{k\in J^c}$
is complete in $\clspan\set{T_{k}g}_{k\in \Z}$ .
\end{corollary}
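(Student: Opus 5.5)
The plan is to move the problem to $L^2[0,1]$ with the fiberization map $\Psi_g$ from (\ref{fiberization_map}), and then use the sign-consistency argument behind Proposition \ref{Completeness_SSign} (the second half of the proof of Theorem \ref{ONB_Characterization}).

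\textbf{Step 1: the setting becomes all of $L^2[0,1]$.} Since $\Phi_g\neq 0$ a.e., the zero set $\Oc_g$ is null. Hence $H_{\Psi_g}=L^2[0,1]$, and $\Psi_g$ is a unitary map from $V:=\clspan\set{T_kg}_{k\in\Z}$ onto $L^2[0,1]$ sending $T_kg$ to $e_k\Phi_g^{1/2}$ (up to the harmless sign convention $k\mapsto -k$, which only replaces $J$ by $-J$).

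\textbf{Step 2: the image of the translates of $f$.} Because $f\in V$, there is a $1$-periodic symbol $m$ with $\widehat f=m\widehat g$. Then $\Psi_g(T_kf)=e_k\,m\,\Phi_g^{1/2}$. The hypothesis $\text{supp}(\widehat f)=\text{supp}(\widehat g)$, together with the fact that for a.e. $\xi$ some $\widehat g(\xi+k)$ is nonzero, gives $m\neq0$ a.e. Following the paper's convention that translates correspond to weighted exponentials with nonnegative generators, I would replace the weight $m\Phi_g^{1/2}$ by $|m|\Phi_g^{1/2}=\Phi_f^{1/2}$. The two weights are then $F=\Phi_f^{1/2}$ and $G=\Phi_g^{1/2}$, with $F\overline G=|m|\Phi_g>0$ a.e. So the question is reduced to woven completeness of $\Wc(F,G)$ for the index set $J$.

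\textbf{Step 3: the orthogonality argument.} Let $h\in L^2[0,1]$ satisfy $\ip{h}{Fe_n}=0$ for $n\in J$ and $\ip{h}{Ge_n}=0$ for $n\in J^c$. Then $h\overline F$ has spectrum in $J^c$ and $h\overline G$ has spectrum in $J$, so these two functions are orthogonal. This gives
$$0=\int_0^1|h|^2F\overline G\,dt=\int_0^1|h|^2\,|m|\,\Phi_g\,dt,$$
and since $|m|\Phi_g>0$ a.e., we get $h=0$. Finally, pulling back by $\Psi_g^{-1}$ yields the claimed completeness in $V$.

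\textbf{Main obstacle.} There are two delicate points, and I expect them to be the hard part.

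First, Step 3 pairs $h\overline F$ with $h\overline G$ in $L^2$. This is legitimate only when $F,G\in L^\infty$, i.e. when the translate systems are Bessel; Proposition \ref{Completeness_SSign} assumes exactly this. Without boundedness, I would work in the $L^1$ formulation of Theorem \ref{Completeness_Characterization} together with Lemma \ref{Lemma_Completeness}. There one needs to justify $\int |h|^2F\overline G=0$ from the spectral conditions. My first attempt would be to approximate $h\overline F$ and $h\overline G$ by Fej\'er means, which preserve the spectra in $J^c$ and $J$ respectively, and pass to the limit. If that fails, I would add a boundedness hypothesis on $\Phi_f,\Phi_g$.

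Second, the replacement of $m$ by $|m|$ in Step 2 needs justification. Multiplying by the unimodular phase $\overline{m}/|m|$ is unitary, but it does not simultaneously fix the weight $G$. So I would check carefully that the paper's identification of $\set{T_kf}_{k\in J}$ with $\Ec(\Phi_f^{1/2},J)$ is compatible with using $\Psi_g$ for the $g$-part. If it is not, the positivity of $F\overline G$ has to come from assuming $m>0$.
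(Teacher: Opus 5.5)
Your proposal has a genuine gap at Step 2, and it cannot be repaired, because the statement is false as written. The only map that carries both parts of the weaving at once is $\Psi_g$. Under it, $T_kf\mapsto e_k\,m\,\Phi_g^{1/2}$, so the weights are $F=m\Phi_g^{1/2}$ and $G=\Phi_g^{1/2}$, and $F\overline G=m\Phi_g$ carries the phase of $m$. Replacing $m$ by $|m|$ amounts to using $\Psi_f$ on the $f$-part and $\Psi_g$ on the $g$-part. These are two different unitaries, differing by the unimodular multiplier $\overline m/|m|$, and they do not preserve completeness of the union.

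Here is a counterexample. Take $\widehat g=\chi_{[0,1]}$, so that $\Phi_g\equiv 1$ and $\set{T_kg}_{k\in\Z}$ is an orthonormal basis of its closed span. Take $f=T_1g$ and $h=g$. Then $\text{supp}(\widehat f)=\text{supp}(\widehat h)=\text{supp}(\widehat g)=[0,1]$, so all hypotheses hold. For $J=\set{0}$ the union lies in $\clspan\set{T_kg}_{k\neq0}$, which is orthogonal to $g\neq 0$; hence it is not complete. This holds whether the second generator is read as $g$ or $h$. In the fiber picture $m=e_1$, and your replacement turns the incomplete weaving $\set{e_1}\cup\set{e_k}_{k\neq 0}$ into the orthonormal basis $\set{e_k}_{k\in\Z}$.

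The paper's own proof makes exactly this move. It applies Proposition \ref{Completeness_SSign} to $\Wc(\Phi_f,\Phi_h)$, i.e.\ it treats both parts as weighted exponentials with nonnegative generators simultaneously. It also never checks the $L^\infty$ hypothesis of that proposition. So your two delicate points are precisely the two holes in the paper's argument, and you were right to distrust them.

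Your Steps 1--3 do prove a corrected statement. Assume $\widehat f=m\widehat g$ with $m>0$ a.e. (more generally, a unimodular constant times a positive function) and $\Phi_f,\Phi_g\in L^\infty[0,1]$. Then every weaving is complete.

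Without boundedness, the Fej\'er-mean repair does not work. You only get $\sigma_N(h\overline F)\to h\overline F$ and $\sigma_N(h\overline G)\to h\overline G$ in $L^1$. That gives no control of $\int_0^1\sigma_N(h\overline F)\,\overline{\sigma_N(h\overline G)}\,dt$. Moreover, $|h|^2|F\overline G|$ need not be integrable at all. The unbounded case would need a genuinely different argument.
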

\begin{proof}
By \cite[Corollary 2.4]{BDR94}, it follows that $\Phi_f$ and $\Phi_h$ are also nonzero almost everywhere on $[0,1].$ The statement then follows by Proposition \ref{Completeness_SSign} that $\Wc(\Phi_f,\Phi_h)$ are wovenly complete in $L^2[0,1].$
\end{proof}

Using Theorem \ref{woven_Riesz_suff_condi}, we obtain the frame-version of Corollary \ref{woven_completeness_system_translates}.
\begin{corollary}
\label{woven_completeness_system_translates}
Let $g\in L^2(\R)$ be such that $A\leq \norm{\Phi_g}_{L^\infty[0,1]}\leq B$ for some positive constants $A$ and $B$. Assume that $f\in \clspan\set{T_{k}g}_{k\in \Z}$ satisfies 
 $m\leq \norm{\Phi_f}_{L^\infty[0,1]}\leq M$ for some positive constants $m$ and $M$. Then for any $J\subseteq\Z$ the sequence $\set{T_{k}g}_{k\in J} \cup \set{T_{k}f}_{k\in J^c}$
is a frame for $\clspan\set{T_{k}g}_{k\in \Z}$. Furthermore, $\set{T_{k}g}_{k\in J} \cup \set{T_{k}f}_{k\in J^c}$ is $\ell^2$-minimal.
\end{corollary}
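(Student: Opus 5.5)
The plan is to transfer the statement to weighted exponentials through the fiberization map $\Psi_g$ and then apply Theorem \ref{woven_Riesz_suff_condi} and Proposition \ref{L2_Independence_Prop}. First I would read the hypotheses as pointwise bounds $A\le \Phi_g\le B$ and $m\le\Phi_f\le M$ a.e.; this is the reading the frame conclusion needs, and as literally written in sup norm it is too weak. Since $\Phi_g$ is bounded below, $\Oc_g$ is null, so $\Psi_g$ is an isometry from $\clspan\set{T_kg}_{k\in\Z}$ onto all of $L^2[0,1]$ and sends $T_kg$ to $\Phi_g^{1/2}e_k$ (up to the convention $e_k$ versus $e_{-k}$, which only relabels $J$).

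The next step is to compute the image of $T_kf$. Since $f\in\clspan\set{T_kg}$, we can write $\widehat f=\widehat{\tau}\,\widehat g$ for some $1$-periodic $\tau$. Then $\Psi_g(T_kf)=\tau\,\Phi_g^{1/2}e_k$ and $|\tau|^2\Phi_g=\Phi_f$. Thus the woven sequence $\set{T_kg}_{k\in J}\cup\set{T_kf}_{k\in J^c}$ is unitarily equivalent to the weaving $W(\Phi_g^{1/2},\tau\Phi_g^{1/2},J)$ in $L^2[0,1]$. By Theorem \ref{OneWeight}(d), both $\Ec(\Phi_g^{1/2})$ and $\Ec(\tau\Phi_g^{1/2})$ are frames, since $|\tau\Phi_g^{1/2}|^2=\Phi_f\in[m,M]$.

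The main obstacle is the sign condition in Theorem \ref{woven_Riesz_suff_condi}. The ratio of the two generators is $\tau$, which need not be real. I expect to need the paper's earlier claim that the fiberization produces a non-negative generator, that is, to identify $\Psi_g(T_kf)$ with $\Phi_f^{1/2}e_k$. This amounts to choosing $\tau\ge0$, for instance by replacing $f$ with a function having the same fiberization, which leaves the frame property unchanged. Granting this, $\tau=\Phi_f^{1/2}/\Phi_g^{1/2}$ lies in $[\sqrt{m/B},\sqrt{M/A}]$, which is strictly positive a.e. Theorem \ref{woven_Riesz_suff_condi} then gives that every weaving is a frame, in fact a Riesz basis, for $L^2[0,1]$. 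Pulling back by $\Psi_g^{-1}$ gives a frame for $\clspan\set{T_kg}_{k\in\Z}$.

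For the $\ell^2$-minimality, the proof of Theorem \ref{woven_Riesz_suff_condi} shows that each synthesis operator $T_J^*$ is invertible, so it is injective. Alternatively, Proposition \ref{Completeness_SSign} or the frame property gives woven completeness, and Proposition \ref{L2_Independence_Prop} then converts this into woven $\ell^2$-minimality. In either case the property transfers back through the unitary $\Psi_g$.
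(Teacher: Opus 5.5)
\textbf{There is a genuine gap, and it cannot be closed because the statement is false as written.}

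Your reduction is correct up to the point you flag, and it is more careful than the paper's. Under $\Psi_g$ the weaving $\set{T_kg}_{k\in J}\cup\set{T_kf}_{k\in J^c}$ becomes $W(\Phi_g^{1/2},\tau\Phi_g^{1/2},J)$ (after relabeling $J$), where $\widehat f=\tau\widehat g$. So the ratio of generators that Theorem \ref{woven_Riesz_suff_condi} sees is $\tau$, not $\Phi_f^{1/2}/\Phi_g^{1/2}$.

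The gap is the step that makes $\tau\ge 0$ by replacing $f$ with a function $f'$ that has the same fiberization. That replacement preserves the properties of $\set{T_kf}_{k\in\Z}$ on its own, but not those of the weaving. The unitary carrying $T_kf$ to $T_kf'$ does not fix the translates of $g$. The unimodular factor $\tau/|\tau|$ is exactly the relative phase that $\Phi_f$ and $\Phi_g$ forget.

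Here is a counterexample. Take $\widehat g=\chi_{[0,1]}$ and $f=T_1g$, so $\tau(\xi)=e^{-2\pi i\xi}$ and $\Phi_f=\Phi_g=1$ a.e. Then $\set{T_kg}_{k\in\Z}$ is an orthonormal basis for $\clspan\set{T_kg}_{k\in\Z}$. For $J=\set{0}$ the weaving is $\set{T_0g}\cup\set{T_jg}_{j\neq 1}$.
\begin{itemize}
\item Every element of this weaving is orthogonal to $T_1g\neq 0$. So the weaving is not complete, hence not a frame, in $\clspan\set{T_kg}_{k\in\Z}$.
\item $T_0g$ appears twice, at index $0\in J$ and at index $-1\in J^c$. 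So the weaving is not $\ell^2$-minimal either.
\end{itemize}

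The paper's own one-line proof has the same defect. It applies Theorem \ref{woven_Riesz_suff_condi} and Proposition \ref{L2_Independence_Prop} to $\Phi_g$ and $\Phi_f$ as if they were the generators of the transferred weaving. That is precisely the identification you noticed and then granted.

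What your argument does establish, essentially verbatim, is a corrected corollary. Assume $A\le\Phi_g\le B$ and $m\le\Phi_f\le M$ a.e.; your pointwise reading of the hypotheses is the right one. Assume in addition that $\widehat f=\tau\widehat g$ with $\tau$ real-valued and of constant sign a.e. Then:
\begin{itemize}
\item $|\tau|^2=\Phi_f/\Phi_g$ lies in $[m/B,\,M/A]$.
\item Theorem \ref{woven_Riesz_suff_condi} makes each $W(\Phi_g^{1/2},\tau\Phi_g^{1/2},J)$ a Riesz basis of $L^2[0,1]$.
\item Pulling back through the unitary $\Psi_g$ gives the frame property for $\clspan\set{T_kg}_{k\in\Z}$, and $\ell^2$-minimality via injectivity of the synthesis operator.
\end{itemize}
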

\begin{proof}
By Theorem \ref{OneWeight} and \cite[Corollary 2.4]{BDR94}, both $\Ec(\Phi_g)$ and $\Ec(\Phi_{f})$ are frames for $L^2[0,1].$
Moreover, $\Phi_g/\Phi_{f}$ is strictly positive almost everywhere on $[0,1].$ The statement then follows by Theorem \ref{woven_Riesz_suff_condi}. Furthermore, $\set{T_{k}g}_{k\in J} \cup \set{T_{k}f}_{k\in J^c}$ is $\ell^2$-minimal by Proposition \ref{L2_Independence_Prop}
\end{proof}
\begin{remark} Fix $a>0$ and let $g\in L^2(\R)$. We can define the \emph{a-fiberization} of $g$, denoted by $\Phi_g (\xi)$, to be the $a$-periodic function $\sum_{k\in \Z}|\widehat{g}(x-ak)|^2$. Modifying the definition of the fiberization map (\ref{fiberization_map}) accordingly, it follows that all results presented above apply equivalently to system of regular translates of the form $\set{T_{ak}g}_{k\in\Z}$ for some $a>0$ and $g\in L^2(\R)$ (See also Remark \ref{generalization1}). \qeddef
\end{remark}
\subsection{Gabor systems at critical density} To apply our main resuls to Gabor systems with critical density, we first recall the definition of \emph{Feichtinger algebra}. For more background and results on this topic, we refer to \cite{Gro01}.
\begin{definition}
Fix a nonzero Schwartz function $\psi\in S(\R)$. 
\begin{enumerate}\setlength\itemsep{0.5em}
    \item [\textup{(a)}] The \emph{short-time Fourier transform} of $f\in L^2(\R)$, denoted by $V_{\psi}f$, is the complex-valued measurable function on $\R^2$ defined by  $$V_{\psi}f(x,w)\Eq \overline{\ip{M_wT_x \psi}{f}} \Eq  \ip{f}{M_wT_x \psi}.$$
     \item [\textup{(b)}] The \emph{Feichtinger algebra} $M^{1}(\R)$ is the space consisting of 
      $f\in L^2(\R)$ for which $\norm{V_\psi f}_{L^{1}(\R^2)}$ is finite, i.e.,
     $M^{1}(\R)=\bigset{f \in L^2(\R)\,\big|\,  \norm{f}_{M^{1}(\R)}\Eq \norm{V_\psi f}_{L^{1}(\R^2)}<\infty}.$
  
    \end{enumerate}
\end{definition} 
It is known that $Zf(x,\xi)= e^{2\pi ix\xi}Z\widehat{f}(\xi,-x)$ for all $x,\xi\in \R$, provided that $f\in M^1(\R)$ (for example, see \cite[Proposition 8.2.2]{Gro01}). Moreover, for any $f\in L^2(\R)$ that is compactly supported in $[0,1]$ we have 
        $Zf(x,\xi) \EQ f(x)$ for all $(x,\xi)\in [0,1]^2.$ We accordingly obtain the following corollary.
\begin{corollary} Let $f,g\in M^1(\R)$ be functions be such that $\widehat{f}$ and $\widehat{g}$ are compactly supported in $[0,1].$  Assume that either  $\widehat{f}(\xi)\widehat{g}(\xi)>0$ or $\widehat{f}(\xi)\widehat{g}(\xi)<0$ for almost every $\xi\in [0,1]$. Then for any subset $J\subseteq \Z^2$ $\set{M_nT_k f}_{(n,k)\in J}\cup \set{M_nT_k g}_{(n,k)\in J^c}$
    is complete in $L^2(\R).$ \qeddef    
\end{corollary}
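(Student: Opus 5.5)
The plan is to transfer the statement to a woven system of two-dimensional weighted exponentials via the Zak transform, and then apply the two-dimensional version of Proposition \ref{Completeness_SSign}. The main text notes that this proposition, like all results of Section \ref{main_reusults}, extends to $L^2[0,1]^2$.

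\textbf{Step 1: reduce to weighted exponentials.} By \eqref{Zak_transform}, $Z(M_nT_kf)=e^{2\pi inx}e^{2\pi ik\xi}Zf$. Since $Z$ is unitary from $L^2(\R)$ onto $L^2[0,1]^2$, completeness of
$$\set{M_nT_kf}_{(n,k)\in J}\cup\set{M_nT_kg}_{(n,k)\in J^c}$$
is equivalent to completeness of the weaving $W(Zf,Zg,J)$ of the two-dimensional weighted exponential systems $\Ec(Zf)$ and $\Ec(Zg)$, indexed by $\Z^2$.

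\textbf{Step 2: identify the weights.} Because $f\in M^1(\R)$, we have $Zf(x,\xi)=e^{2\pi ix\xi}Z\widehat f(\xi,-x)$. Since $\widehat f$ is supported in $[0,1]$, $Z\widehat f(\xi,y)=\widehat f(\xi)$ for $\xi\in[0,1]$ and all $y$. This follows because $Z\widehat f(\xi,\cdot)$ is a single term, and quasi-periodicity handles $y=-x$ outside $[0,1]$; the factor has modulus one. Hence $Zf(x,\xi)=e^{2\pi ix\xi}\widehat f(\xi)$ on $[0,1]^2$, and likewise $Zg(x,\xi)=e^{2\pi ix\xi}\widehat g(\xi)$. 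Therefore
$$Zf\,\overline{Zg}(x,\xi)=\widehat f(\xi)\overline{\widehat g(\xi)}.$$
I will check whether the hypothesis $\widehat f\widehat g>0$ is meant to give $\widehat f\overline{\widehat g}>0$. This holds when the sign condition forces both functions to be real-valued, or the condition can be read as $\widehat f\,\overline{\widehat g}>0$. Either way, $Zf\,\overline{Zg}$ is a.e. strictly positive, or a.e. strictly negative, on $[0,1]^2$.

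\textbf{Step 3: boundedness.} Proposition \ref{Completeness_SSign} needs $Zf,Zg\in L^\infty$. Since $f\in M^1(\R)$, $\widehat f\in M^1(\R)$ is continuous and bounded, so $|Zf|=|\widehat f|$ is bounded on $[0,1]^2$, and the same holds for $g$.

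\textbf{Step 4: conclude.} The two-dimensional version of Proposition \ref{Completeness_SSign} then gives that $\Wc(Zf,Zg)$ is wovenly complete in $L^2[0,1]^2$. Pulling back by $Z^{-1}$ yields completeness in $L^2(\R)$ for every $J\subseteq\Z^2$. If one wants to avoid the $L^\infty$ issue entirely, the direct argument also works. Take $h\perp$ the weaving. Then $h\overline{Zf}$ and $h\overline{Zg}$ lie in $L^1$ and have complementary vanishing Fourier coefficients, so Lemma \ref{Lemma_Completeness} applies in two dimensions. This gives $\int|h|^2\,\overline{Zf}Zg=0$, which forces $h=0$.

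\textbf{Main obstacle.} The hard part is Step 2: justifying the Zak identity pointwise on all of $[0,1]^2$, including the variable range $-x\in[-1,0]$ and the phase factor. I would handle this with the quasi-periodicity $Z\widehat f(\xi,y+1)=Z\widehat f(\xi,y)$ in the second variable, and then confirm that the sign hypothesis really yields a sign condition on $\widehat f\,\overline{\widehat g}$.
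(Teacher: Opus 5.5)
Your proposal is correct and follows the paper's implicit argument: the Zak transform turns the weaving into $W(Zf,Zg,J)$, the quoted identities give $Zf(x,\xi)=e^{2\pi i x\xi}\widehat f(\xi)$ on $[0,1]^2$, and the two-dimensional Proposition~\ref{Completeness_SSign} applies to the bounded weights $Zf,Zg$. Two small remarks. For $\xi\in(0,1)$ the series for $Z\widehat f(\xi,y)$ is just its $j=0$ term, so no quasi-periodicity argument is needed. Boundedness is genuinely used in that proposition, so your $L^1$ variant in Step 4 should be dropped rather than relied on.

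Your caution about the hypothesis is warranted: the argument uses $\widehat f\,\overline{\widehat g}>0$ (or $<0$), and the literal condition $\widehat f\widehat g>0$ does not suffice. For example, take $\widehat f=\phi e^{2\pi i\xi}$ and $\widehat g=\phi e^{-2\pi i\xi}$ with $\phi=(\xi(1-\xi))^{1/4}$ on $[0,1]$. Then $\widehat f\widehat g=\phi^2>0$, but $f=T_{-2}g$, so a suitable $J$ removes $\set{M_nT_kg}_{n\in\Z,\,k\in\{0,1\}}$ from the exact system $\Gc(g,1,1)$, leaving an incomplete weaving.
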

\section*{acknowledgement}
We thank Chris Heil for his helpful comments on this project. We also thank Thibaud Alemany for helpful conversations during the course of this project.

\end{document}